\newcommand{\C}{\mathbb C}
\newcommand{\N}{\mathbb N}
\newcommand{\Z}{\mathbb Z}
\newcommand{\pa}{\partial}
\newcommand{\bF}{\mathbf{F}}
\newcommand{\de}{\delta}
\newcommand{\e}{\varepsilon}
\renewcommand{\l}{\lambda}
\newcommand{\tr}{\;^t}
\newcommand{\Tr}{\mathrm{tr}}
\newcommand{\diag}{\mathrm{diag}}
\newcommand{\vv}{\mathrm{v}}
\newtheorem{theorem}{Theorem}[section]
\newtheorem{proposition}{Proposition}[section]
\newtheorem{lemma}{Lemma}[section]
\newtheorem{fact}{Fact}[section]
\newtheorem{remark}{Remark}[section]
\newcommand{\HGF}[5]
{{}_{#1}F_{#2}\left(\begin{matrix}#3\\#4\end{matrix};#5\right)}
\title
[Monodromy representations of hypergeometric systems]
{Monodromy representations of hypergeometric systems 
with respect to fundamental series solutions}
\author{Keiji Matsumoto}
\email{matsu@math.sci.hokudai.ac.jp}
\address[Matsumoto]{
Department of Mathematics\\
Hokkaido University\\
Sapporo 060-0810, Japan
}
\keywords{Monodromy representation, hyperegeometric functions}
\subjclass[2010]{32S40, 58K10, 34M35, 33C20, 33C65.}
\date{\today}
\begin{document}
\maketitle
\begin{abstract}
We study the monodromy representation of 
the generalized hypergeometric differential equation and 
that of Lauricella's $F_C$ system of hypergeometric differential equations. 
We use fundamental systems of solutions expressed by the hypergeometric series.
We express non-diagonal circuit matrices as reflections with respect to 
root vectors with all entries $1$.
We present a simple way to obtain circuit matrices.
\end{abstract}

\section{Introduction}
The hypergeometric series $\HGF21{a_1,a_2}{b_1}{x}$ satisfies 
the hypergeometric differential equation, which is 
second order linear, and with regular singular points at $x=0,1,\infty$. 
%There are several generalizations of the hypergeometric series and differential equation.
There are two natural ways to generalize the hypergeometric differential 
equations: 
one is to higher rank ordinary differential equations
and the other is to integrable systems of 
differential equations of multi-variables.
As the former, 
generalized hypergeometric series and equations are well known.
As the latter, 
four kinds of hypergeometric series and systems of hypergeometric 
differential equations are introduced  by P. Appell and G. Lauricella.

In this paper, we study the monodromy representation of 
the generalized hypergeometric differential equation and 
that of Lauricella's $F_C$ system of hypergeometric differential equations. 
We use fundamental systems of solutions expressed by hypergeometric series.
We express the circuit matrices along generators of the fundamental group 
of the complement of the singular locus with respect to each fundamental 
system of solutions. The aim of this paper is the presentation of 
a simple way to obtain circuit matrices.

Let us explain our method. For each case of 
the study of monodromy representations, the problem reduces 
to determining a circuit matrix $M$, 
since the others are trivially given as diagonal matrices.
We can regard this target circuit matrix $M$ as a complex reflection with 
respect to a kind of an inner product, i.e., 
the eigenspace of $M$ of eigenvalue $1$ is the orthogonal complement of
an eigenvector $v$ of $M$ of eigenvalue $\l(\ne 1)$.
Let $H$ be the gram matrix of our fundamental system of solutions with 
respect to this inner product. We can show that it is diagonal. 
We normalize our fundamental system 
so that the $\l$-eigenvector $v$ of $M$ becomes $(1,\dots,1)$. 
Though the matrix $H$ is changed by this normalization, 
it is still diagonal. By regarding diagonal entries of $H$ as indeterminants, 
we set up a system of equations by the Riemann scheme or 
the relations induced from the fundamental group. 
By solving it, we determine the matrices $H$ and $M$.

There are several studies for the monodromy representation  
of the generalized hypergeometric differential equation, refer to 
\cite{BH}, \cite{Le}, \cite{Mi}, and \cite{O}. 
For that of Lauricella's $F_C$ system in two variables, we have  
many ways to compute circuit matrices, refer to 
\cite{GM}, \cite{HU}, \cite{Kan}, \cite{Kat} and \cite{T}. 
The case of $m$ variables, it was an open problem for a long time to 
determine the monodromy representation. 
We did not have a simple system of generators of the fundamental group of 
the complement of the singular locus. 
Recently, this open problem is 
%elegantly 
solved in \cite{G}: 
it is shown that the fundamental group is generated by $m+1$ loops,  
and that the circuit transformations along them can be expressed 
by the intersection from on twisted homology groups 
associated with Euler type integral representations of solutions.

This paper consists of four sections.
We determine the monodromy representations of the hypergeometric differential 
equation, of generalized  one, and of Lauricella's $F_C$ system in 
\S2, \S3 and \S4, respectively. We can obtain the results in \S2 
from those in \S3 by regarding the rank $p$ as 2. 
However, we describe details in \S2
since this section helps readers to understand our method well, 
and results in \S2 need when we prove the key proposition in \S4 by 
the induction on the number of variables.
Our study in \S4 is based on some results in \cite{G}.
Lemma \ref{lem:reduct} is an addition to them 
associated with the fundamental group. 
This lemma relates a product of loops in $\C^m$ to a loop in $\C^{m-1}$, and 
enables us to decrease the number of variables.
Anyone can simply give an expression of the circuit matrix $M$ 
for the case of two variables by the reduction to results in \S2.
%the case of one variable. 

\section{Monodromy representation of $_2F_1$}
\subsection{Hypergeometric differential equation}
The hypergeometric series $\HGF21{a_1,a_2}{b_1}{x}$ is defined by 
$$\HGF21{a_1,a_2}{b_1}{x}
=\sum_{n=0}^\infty \frac{(a_1,n)(a_2,n)}{(b_1,n)(1,n)}x^n,
$$
where the main variable $x$ is in $\{x\in \C\mid |x|<1\}$, 
$a_1,a_2,b_1$ are complex parameters with 
$b_1\notin -\N=\{0,-1,-2,\dots\},$ and 
Pochhammer's symbol $(a,n)$ stands for $a(a+1)\cdots(a+n-1)$. 
This function satisfies the hypergeometric differential equation 
\begin{equation}
\label{eq:HGDE}
\left[x(1- x)(\frac{d}{dx})^2+ \{b_1- 
(a_1+ a_2+ 1)x\}(\frac{d}{dx})- a_1a_2\right]f(x)=0.
\end{equation}
This is a Fuchsian differential equation with regular singular points
at $x=0,1,\infty$. 
The Riemann scheme of (\ref{eq:HGDE})  is 
\begin{equation}
\label{eq:R-scheme}
\begin{array}{ccc}
x=0      & x=1    & x=\infty\\
\hline
0        & 0      &a_1\\
1-b_1    & b_1-a_1-a_2      &a_2\\
\end{array} 
\end{equation}
% $$
% \begin{array}{ccc}
% x=0 & x=1 &x=\infty \\
% F_{01}(x)& F_{11}(x) 
% &(1/x)^{a_1}F_{\infty1}(x)\\
% x^{1-b_1}F_{02}(x)& (1-x)^{b_1-{a_1}-{a_2}}F_{12}(x) 
% &(1/x)^{a_2}F_{\infty2}(x)\\
% \end{array}
% $$
% where
% $$\begin{array}{rl}
% F_{01}(x)=&F({a_1},{a_2},b_1;x),\\ 
% F_{02}(x)=&F({a_1}-b_1+1,{a_2}-b_1+1,2-c;x),\\
% F_{11}(x)=&F({a_1},{a_2},{a_1}+{a_2}-c+1;1-x),\\ 
% F_{12}(x)=&F(c-{a_1},c-{a_2},c-{a_1}-{a_2}+1;1-x),\\
% F_{\infty1}(x)=&F({a_1},{a_1}-c+1,{a_1}-{a_2}+1;{1}/{x}),\\ 
% F_{\infty2}(x)=&F({a_2},{a_2}-c+1,{a_2}-{a_1}+1;{1}/{x}).\\
% \end{array}
% $$
and a fundamental system of solutions to (\ref{eq:HGDE}) 
for $b_1\notin \Z$ 
%around each singular point is given as follows:
around $\dot x=\e$ is given by the column vector 
$$
%\bF_2(x)=
\begin{pmatrix}
\HGF21{a_1,a_2}{b_1}{x}\\
 x^{1-b_1}\HGF21{a_1-b_1+1,a_2-b_1+1}{2-b_1}{x}
\end{pmatrix},
$$
where $\e$ is a sufficiently small positive real number.

\subsection{Circuit matrices $M_0$ and $M_1$}
\label{ss:CMG}
In this subsection, we assume that 
\begin{equation}
\label{eq:non-integral}
a_1,\ a_2,\  b_1,\ a_1-b_1,\ a_2-b_1,\ a_1+a_2-b_1\notin \Z.
\end{equation}
We set 
$$A_1=\exp(2\pi\sqrt{-1}a_1),\quad 
A_2=\exp(2\pi\sqrt{-1}a_2),\quad 
B_1=\exp(2\pi\sqrt{-1}b_1),$$
which are different from $1$ under our assumption. 
Let $\rho_0$ and $\rho_1$ be loops in $X=\C-\{0,1\}$ with base $\dot x=\e$ 
represented by 
\begin{equation}
\label{eq:loops}
\begin{array}{ccl}
\rho_0&:&[0,1]\ni t \mapsto \e e^{2\pi\sqrt{-1}t}\in X,\\
\rho_1&:&[0,1]\ni t \mapsto 1-(1-\e)e^{2\pi\sqrt{-1}t}\in X.
\end{array}
\end{equation}
Note that $\rho_0$ and $\rho_1$ turn positively around $x=0$ and $x=1$
once, respectively. The fundamental group $\pi_1(X,\dot x)$ is freely 
generated by these loops. 
We set $\rho_\infty=(\rho_0\circ \rho_1)^{-1}$, where 
$\rho_0\circ \rho_1$ is a loop joining $\rho_0$ to $\rho_1$.

We select a fundamental system of solutions to (\ref{eq:HGDE}) 
%around each singular point is given as follows:
around $\dot x$ as
\begin{equation}
\label{eq:basis}
\bF_2^g(x)=\begin{pmatrix}
g_1 & 0 \\
0 & g_2
\end{pmatrix}
\begin{pmatrix}
\HGF21{a_1,a_2}{b_1}{x}\\
 x^{1-b_1}\HGF21{a_1-b_1+1,a_2-b_1+1}{2-b_1}{x}
\end{pmatrix},
\end{equation}
where $g_1$ and $g_2$ are non-zero constants. 
Let $\rho$ be an element of $\pi_1(X,\dot x)$. 
Then there exists $M_\rho\in GL_2(\C)$ such that 
the analytic continuation of $\bF_2^g(x)$ along $\rho$ is 
expressed as
$$M^g_\rho \bF_2^g(x).$$
We call $M^g_\rho$ the circuit matrix along $\rho$ 
with respect to the basis $\bF_2^g(x)$.
We set 
$$M^g_0=M^g_{\rho_0},\quad M^g_1=M^g_{\rho_1},\quad M^g_\infty=M^g_{\rho_\infty}.
$$
By the expression of $\bF_2^g(x)$, the following is obvious.
\begin{lemma}
\label{lem:M0}
For any non-zero constants $g_1$ and $g_2$, we have
$$M^g_0
=\begin{pmatrix}
1 & 0 \\
0 & B_1^{-1}
\end{pmatrix}.
$$
%where $\ex(t)=e^{2\pi\sqrt{-1}t}.$
\end{lemma}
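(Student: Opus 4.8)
The plan is to track what happens to each of the two entries of $\bF_2^g(x)$ under analytic continuation along the loop $\rho_0$, which winds once positively around $x=0$. Since both entries turn out to be multiplied by scalars, and the normalizing prefactor $\diag(g_1,g_2)$ is itself diagonal, the constants $g_1,g_2$ should play no role; I would confirm they drop out at the end.

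First I would observe that $\HGF21{a_1,a_2}{b_1}{x}$ is a convergent power series in $x$ near the base point $\dot x=\e$, hence a single-valued holomorphic function on a neighborhood of $0$; continuing it once around $0$ therefore returns it to itself, contributing the factor $1$ to the first diagonal entry. Next I would write the second entry as $x^{1-b_1}h(x)$, where $h(x)=\HGF21{a_1-b_1+1,a_2-b_1+1}{2-b_1}{x}$ is again holomorphic near $0$ and so returns to itself as well. The only multivalued factor is $x^{1-b_1}$: along $\rho_0$ the argument of $x$ increases by $2\pi$, so $x^{1-b_1}$ is multiplied by $e^{2\pi\sqrt{-1}(1-b_1)}$.

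The one point that needs care is the evaluation of this branch factor. Because $e^{2\pi\sqrt{-1}}=1$, we obtain $e^{2\pi\sqrt{-1}(1-b_1)}=e^{-2\pi\sqrt{-1}b_1}=B_1^{-1}$, so the second entry acquires the factor $B_1^{-1}$ rather than $B_1$. Collecting the two factors, the continuation of the unnormalized column is given by left multiplication by $\diag(1,B_1^{-1})$; since this diagonal matrix commutes with $\diag(g_1,g_2)$, the very same matrix represents the continuation of $\bF_2^g(x)$, yielding $M^g_0=\diag(1,B_1^{-1})$ for all non-zero $g_1,g_2$. There is no genuine obstacle here, as the claim is a direct computation from the explicit form of the basis; the emphasis is simply on reading off the branch factor of $x^{1-b_1}$ correctly and on noting the cancellation of the integer part of the exponent.
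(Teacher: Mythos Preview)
Your argument is correct and is exactly the computation the paper has in mind: the paper states that the lemma is ``obvious'' from the expression of $\bF_2^g(x)$ and gives no further proof, so you have simply written out the evident details.
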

% Since $M_0^g$ is independent of $g_1,g_2$, it is denoted 
% by $M_0$, whose explicit form is given in Lemma \ref{lem:M0}. 

By using an Euler type integral representation of 
solutions to $(\ref{eq:HGDE})$,
%$\HGF{2}{1}{a_1,a_2}{b_1}{x}$, 
we can show the following as is in Lemma 5.2 of \cite{M}.
\begin{lemma}
\label{lem:invariant}
There exists $H\in GL_2(\C)$ such that 
$$M^g_\rho H \tr (M^g_\rho)^\vee =H$$
for any $\rho\in \pi_1(X,\dot x)$, 
where $z(a_1,a_2,b_1)^\vee=z(-a_1,-a_2,-b_1)$ 
for any function $z$ of $a_1,a_2,b_1$, and 
$Z^\vee=(z_{ij}^\vee)$ for a matrix $Z=(z_{ij})$.
\end{lemma}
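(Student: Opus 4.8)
The plan is to realize the solutions of \eqref{eq:HGDE} by an Euler type integral and to obtain $H$ as the intersection matrix of an associated twisted homology group, whose entries are intersection numbers of twisted cycles. Since intersection numbers are topological invariants, they are preserved by the monodromy action, and that invariance is precisely the asserted identity. Concretely, I would start from a representation of the form
$$f(x)=\int_{\gamma} t^{a_2-1}(1-t)^{b_1-a_2-1}(1-xt)^{-a_1}\,dt,$$
whose integrand $\Phi(t)$ is a multivalued function on the $t$-line punctured at $\{0,1,1/x,\infty\}$ and determines a rank one local system $\mathcal{L}$ there. The analytic continuation of $f$ along a loop $\rho$ in $X$ moves the point $t=1/x$ and thereby induces an automorphism of the twisted homology group $H_1(\mathcal{L})$; choosing twisted cycles $\gamma_1,\gamma_2$ whose integrals reproduce (constant multiples of) the entries of $\bF_2^g(x)$, this automorphism is represented, in the matching normalization, by $M^g_\rho$ acting on $\gamma_1,\gamma_2$.

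Next I would identify the effect of the substitution $(a_1,a_2,b_1)\mapsto(-a_1,-a_2,-b_1)$ underlying the operation $\vee$. This negation reverses the exponents of $\Phi$ modulo $\Z$, i.e.\ it replaces $\mathcal{L}$ by the dual local system $\mathcal{L}^{-1}$; hence the monodromy on $H_1(\mathcal{L}^{-1})$ is represented by $(M^g_\rho)^\vee$ acting on a dual basis of cycles $\delta_1,\delta_2$. I would then set $H=\big(I(\gamma_i,\delta_j)\big)_{i,j}$ using the intersection pairing $H_1(\mathcal{L})\times H_1(\mathcal{L}^{-1})\to\C$.

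Because the monodromy along $\rho$ is induced by an ambient homeomorphism (an isotopy of the four point configuration as $x$ runs along $\rho$), it preserves intersection numbers, so that $I(\rho_*\gamma_i,\rho_*\delta_j)=I(\gamma_i,\delta_j)$. Expanding the left-hand side with $\rho_*$ acting by $M^g_\rho$ on the $\gamma$'s and by $(M^g_\rho)^\vee$ on the $\delta$'s yields $M^g_\rho\,H\,\tr(M^g_\rho)^\vee=H$; since $\pi_1(X,\dot x)$ is free on $\rho_0,\rho_1$ and $(\cdot)^\vee$ is multiplicative, it suffices to verify this on the generators, and it then holds for every $\rho$. The diagonal rescaling by $\diag(g_1,g_2)$ merely replaces $H$ by $\diag(g_1,g_2)\,H\,\diag(g_1,g_2)^\vee$, which preserves the relation and invertibility, so the statement is insensitive to the choice of $g_1,g_2$. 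As a consistency check, invariance under the explicit $M^g_0$ of Lemma \ref{lem:M0} already forces the off-diagonal entries of $H$ to vanish, since $B_1\ne 1$; this matches the diagonal shape anticipated in the introduction.

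The part requiring genuine work, rather than this formal bookkeeping, is the assertion $H\in GL_2(\C)$, i.e.\ that the intersection matrix is nondegenerate. This is where the hypothesis \eqref{eq:non-integral} enters: under non-integrality of the exponents the twisted homology is concentrated in degree one and the pairing between $H_1(\mathcal{L})$ and $H_1(\mathcal{L}^{-1})$ is perfect, so one must describe the cycles $\gamma_i,\delta_j$ explicitly enough to compute their self- and mutual intersection numbers and see that the resulting matrix is invertible. This nondegeneracy, together with the explicit evaluation of the intersection numbers, is exactly the content carried over from Lemma 5.2 of \cite{M}.
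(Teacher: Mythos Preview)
Your proposal is correct and is precisely the approach the paper indicates: the paper does not give a self-contained proof but says ``By using an Euler type integral representation of solutions to (\ref{eq:HGDE}), we can show the following as is in Lemma 5.2 of \cite{M}.'' Your sketch unpacks exactly that reference --- realizing $H$ as the intersection matrix on twisted homology for the local system of the Euler integrand, with $\vee$ corresponding to the dual local system and nondegeneracy coming from the non-integrality assumptions --- so there is nothing to add.
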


%Though the explicit form of the matrix $H$ is known, 
Note that the matrix $H$ depends on the ratio of $g_1$ and $g_2$. 
We treat the entries of $H$ as indeterminants. 
By determining them, 
we express a representation matrix of the circuit transformation along $\rho_1$.

\begin{lemma}
\label{lem:diag}
The matrix $H$ in Lemma \ref{lem:invariant} is diagonal.
\end{lemma}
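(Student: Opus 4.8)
The plan is to obtain the diagonality of $H$ from a single instance of the invariance relation in Lemma \ref{lem:invariant}, namely the one for $\rho=\rho_0$, combined with the explicit form of $M^g_0$ given in Lemma \ref{lem:M0}. First I would compute the effect of the involution $\vee$ on $M^g_0=\diag(1,B_1^{-1})$. Since $B_1=\exp(2\pi\sqrt{-1}\,b_1)$ is a function of $b_1$, the substitution $b_1\mapsto -b_1$ sends $B_1$ to $B_1^{-1}$, so that the entry $B_1^{-1}$ is sent to $B_1$; hence $(M^g_0)^\vee=\diag(1,B_1)$, and being diagonal it equals its own transpose, so $\tr(M^g_0)^\vee=\diag(1,B_1)$.

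Next I would substitute these into $M^g_0\,H\,\tr(M^g_0)^\vee=H$. Writing $H=(h_{ij})$ and using that multiplication by a diagonal matrix on the left and one on the right scales the $(i,j)$ entry of $H$ by $(M^g_0)_{ii}\cdot\bigl((M^g_0)^\vee\bigr)_{jj}$, the matrix identity splits into the four scalar equations $h_{11}=h_{11}$, $B_1 h_{12}=h_{12}$, $B_1^{-1}h_{21}=h_{21}$, and $h_{22}=h_{22}$. The two diagonal equations are automatically satisfied, while the off-diagonal ones read $(B_1-1)h_{12}=0$ and $(B_1^{-1}-1)h_{21}=0$.

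Finally, since assumption \eqref{eq:non-integral} includes $b_1\notin\Z$, we have $B_1\ne 1$, and the two displayed equations force $h_{12}=h_{21}=0$, proving that $H$ is diagonal. I expect no serious obstacle in this argument; the only points demanding care are the sign flip in the exponent when forming $(M^g_0)^\vee$ (so that $B_1^{-1}$ becomes $B_1$ rather than remaining $B_1^{-1}$) and the explicit use of $B_1\ne 1$, which is precisely where the non-integrality of $b_1$ is needed. Note that only the relation attached to $\rho_0$ is used here; the full invariance over all of $\pi_1(X,\dot x)$ is reserved for the subsequent determination of the nondiagonal circuit matrix $M^g_1$.
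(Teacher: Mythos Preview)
Your proof is correct and follows essentially the same approach as the paper: apply the invariance relation of Lemma~\ref{lem:invariant} to $\rho=\rho_0$, use $(M^g_0)^\vee=\diag(1,B_1)$, and read off $(B_1-1)h_{12}=0$, $(B_1^{-1}-1)h_{21}=0$ to conclude via $B_1\ne1$. The only difference is that you spell out the computation of $(M^g_0)^\vee$ in words, whereas the paper writes the matrix product directly.
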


\begin{proof}
We set 
$H=\begin{pmatrix} h_{11} & h_{12} \\ h_{21}& h_{22}
\end{pmatrix}$. 
By Lemma \ref{lem:invariant}, we have
\begin{eqnarray*}
M_0^gH\tr (M_0^g)^\vee &=&
\begin{pmatrix}
1 & 0 \\
0 & B_1^{-1}
\end{pmatrix}
\begin{pmatrix} h_{11} & h_{12} \\ h_{21}& h_{22}
\end{pmatrix}
\begin{pmatrix}
1 & 0 \\
0 & B_1
\end{pmatrix}\\
&=&\begin{pmatrix} h_{11} & B_1h_{12} \\ B_1^{-1}h_{21}& h_{22}
\end{pmatrix}
=H.
\end{eqnarray*}
Since $B_1\ne 1$ under our assumption, 
$h_{12}$ and $h_{21}$ should be $0$.
\end{proof}

By the Riemann scheme $(\ref{eq:R-scheme})$, it is easy to see that 
the eigenvalues of $M_1$ are $1$ and $\l=B_1/(A_1A_2)$.
%$B_1A_1^{-1}A_2^{-1}$. $\dfrac{B_1}{A_1A_2}$. 
Note that 
%$\ex(b_1-a_1-a_2)\ne 1$ $B_1/(A_1A_2)\ne 1$ 
$\l\ne 1$ under our assumption.
\begin{lemma}
\label{lem:H-orth}
Let $v=(v_1,v_2)$ be the eigenvector of $M^g_1$ of eigenvalue 
$\l=B_1/(A_1A_2)$, and $w$ be that of eigenvalue $1$. Then we have
$$w H\tr v^\vee =0,\quad v H\tr v^\vee \ne0,\quad v_1v_2\ne0.$$
\end{lemma}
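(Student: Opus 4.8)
The plan is to read the invariance relation of Lemma \ref{lem:invariant} as the invariance of a pairing, and then to extract the three assertions from the eigenvalue bookkeeping of $M^g_1$, invoking the hypotheses (\ref{eq:non-integral}) only where they are genuinely needed. First I would define, for row vectors $x,y\in\C^2$, the pairing $\la x,y\ra:=x\,H\,\tr y^\vee$. Since $\vee$ merely substitutes $-a_1,-a_2,-b_1$ for $a_1,a_2,b_1$, it is a ring homomorphism and hence commutes with matrix multiplication, so $\tr\big((yM^g_1)^\vee\big)=\tr\big((M^g_1)^\vee\big)\,\tr y^\vee$. Combining this with the case $\rho=\rho_1$ of Lemma \ref{lem:invariant}, namely $M^g_1 H\,\tr\big((M^g_1)^\vee\big)=H$, yields the invariance
$$\la xM^g_1,\,yM^g_1\ra=\la x,y\ra\qquad(x,y\in\C^2).$$
Here $v,w$ are the left eigenvectors $vM^g_1=\l v$ and $wM^g_1=w$, which are the natural eigenvectors for the monodromy action on coefficient row vectors.

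Next I would apply this invariance to $v$ and $w$. Because $\l^\vee=\big(B_1/(A_1A_2)\big)^\vee=A_1A_2/B_1=\l^{-1}$ and $1^\vee=1$, I obtain $\la w,v\ra=\la wM^g_1,vM^g_1\ra=\l^\vee\la w,v\ra=\l^{-1}\la w,v\ra$ and likewise $\la v,w\ra=\l\la v,w\ra$. Since $\l\neq1$ under (\ref{eq:non-integral}), both $\la w,v\ra$ and $\la v,w\ra$ vanish; the first is exactly the assertion $wH\tr v^\vee=0$. For $vH\tr v^\vee\neq0$ I would argue by nondegeneracy: applying $\vee$ to $vM^g_1=\l v$ and $wM^g_1=w$ shows that $v^\vee,w^\vee$ are eigenvectors of $(M^g_1)^\vee$ for the distinct eigenvalues $\l^{-1}$ and $1$, hence linearly independent, so $\tr v^\vee,\tr w^\vee$ form a basis of the column space. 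If $vH\tr v^\vee$ were $0$, then together with $wH\tr v^\vee=0$ the nonzero column $H\tr v^\vee$ would be annihilated by the basis $\{v,w\}$ of row vectors, forcing $H\tr v^\vee=0$ and contradicting $H\in GL_2(\C)$.

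Finally, for $v_1v_2\neq0$ I would compare local monodromies at infinity. If $v_1=0$ or $v_2=0$, then $v$ is proportional to $(0,1)$ or $(1,0)$, which by Lemma \ref{lem:M0} are exactly the eigenvectors of the diagonal matrix $M^g_0$; hence $v$ would be a common eigenvector of $M^g_0$ and $M^g_1$, and therefore of $M^g_\infty$. On such a common eigenvector the eigenvalue of $M^g_\infty$ equals the inverse of the product of the corresponding eigenvalues of $M^g_0$ and $M^g_1$, giving $A_1A_2/B_1$ when $v\propto(1,0)$ and $A_1A_2$ when $v\propto(0,1)$. On the other hand the Riemann scheme (\ref{eq:R-scheme}), together with $\det M^g_\infty=A_1A_2$, forces the eigenvalues of $M^g_\infty$ to be $A_1$ and $A_2$; equating these with the values above would force one of $a_1-b_1,\ a_2-b_1,\ a_1,\ a_2$ to lie in $\Z$, contradicting (\ref{eq:non-integral}).

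I expect this last step to be the main obstacle, since it is the only place that uses genuinely new input, the eigenvalues of $M^g_\infty$ (equivalently, the irreducibility of the representation), rather than the formal invariance of the pairing. All the care about the orientation convention for $\rho_\infty$ and the determinant normalization $\det M^g_\infty=A_1A_2$ enters precisely here; the first two assertions, by contrast, are pure consequences of the invariance of $\la\,\cdot\,,\cdot\,\ra$ under $M^g_1$ and of $H\in GL_2(\C)$.
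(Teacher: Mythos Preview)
Your argument is correct and follows essentially the same route as the paper: invariance of the pairing gives $wH\tr v^\vee=0$, nondegeneracy of $H$ together with the linear independence of $v,w$ gives $vH\tr v^\vee\ne0$, and a comparison of eigenvalues at infinity rules out $v_1v_2=0$. Two minor differences are worth noting: you keep track of $\l^\vee=\l^{-1}$ explicitly (the paper writes $\l$ in the analogous line, which is harmless since only $\l\ne1$ is used), and for $v_1v_2\ne0$ you argue directly that a coordinate eigenvector would be common to $M^g_0$ and $M^g_1$, whereas the paper first uses the diagonality of $H$ (Lemma~\ref{lem:diag}) to pin down $w$ and then writes out $M^g_1$ and $M^g_0M^g_1$ explicitly; your shortcut avoids that step.
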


\begin{proof}
By Lemma \ref{lem:invariant}, we have  
\begin{eqnarray*}
w H\tr v^\vee&=&w (M^g_1H\tr (M^g_1)^\vee)\tr v^\vee=
(w M^g_1)H\tr (vM^g_1)^\vee\\
&=&
%\dfrac{B_1}{A_1A_2}
%\ex(b_1-a_1-a_2)
\l w H\tr v^\vee.
\end{eqnarray*}
Since $\l\ne 1$, 
%$B_1/(A_1A_2)\ne 1$, 
$w H\tr v^\vee$ vanishes.

% Since $H$ is a scalar multiple of the 
% intersection matrix for bases of twisted homology groups, 
% it is non degenerate. 
Note that 
$$\begin{pmatrix} v \\ w \end{pmatrix}
H \tr \begin{pmatrix} v \\ w \end{pmatrix}^\vee
=\begin{pmatrix} 
v H\tr v^\vee & 0 \\
0 & w H\tr w^\vee
\end{pmatrix}.
$$
Since $v$ and $w$ are linearly independent, 
if $v H\tr v^\vee=0$ then $H$ degenerates.
This contradicts to $H\in GL_2(\C)$.
Thus we have $v H\tr v^\vee \ne 0$.

Suppose that $v_1=0$.
Then $(0,1)$ is the eigen vector of $M^g_1$ of eigenvalue $\l$.
%$\ex(b_1-a_1-a_2)$.
By the equality $wH\tr v^\vee =0$, 
$(1,0)$ is the eigen vector of $M^g_1$ of eigenvalue $1$.
Thus we have 
$$M^g_1=\begin{pmatrix}
1 & 0\\
0 & B_1/(A_1A_2)
%\ex(b_1-a_1-a_2)
\end{pmatrix}
.$$
The eigenvalues of 
$$(M^g_\infty)^{-1}=M^g_0M^g_1=
\begin{pmatrix}
1 & 0\\
0 & 1/(A_1A_2)
%\ex(-a_1-a_2)
\end{pmatrix}
$$
are $1$ and $1/(A_1A_2)$;
%$\ex(-a_1-a_2)$; 
this contradicts to 
the Riemann scheme (\ref{eq:R-scheme}) under our assumption 
(\ref{eq:non-integral}). 
Hence we have $v_1\ne0$. 
We can similarly show $v_2\ne0$. 
\end{proof}

Note that the eigenvector $v$ of $M^g_1$ of eigenvalue $\l$ 
depends on the ratio of $g_1$ and $g_2$. 
We can choose $g_1,g_2$ in (\ref{eq:basis}) so that 
the eigenvector $v$ of $M^g_1$ of eigenvalue $\l$ becomes $\vv=(1,1)$.
From now on, we fix the constants $g_1$ and $g_2$ as the above values.
We denote the fundamental system of solutions to (\ref{eq:HGDE}) 
around $\dot x$ for these constants in (\ref{eq:basis}) by $\bF_2(x)$.
The circuit matrices along $\rho_0,\rho_1,\rho_\infty$ 
with respect to $\bF_2(x)$ are 
denoted by $M_0$, $M_1$, $M_\infty$, respectively.

\begin{lemma}
\label{lem:ref}
The circuit matrix $M_1$ is expressed as
$$M_1=id_2-\frac{1-\l}{\vv H\tr \vv}H\tr \vv \vv,$$
where $id_m$ is the unit matrix of size $m$,
$\l=B_1/(A_1A_2)$ and $\vv=(1,1)$.
\end{lemma}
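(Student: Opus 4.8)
The plan is to set $N=id_2-\frac{1-\l}{\vv H\tr\vv}\,H\tr\vv\,\vv$ and to prove $N=M_1$ by checking that the two matrices act identically on the left eigenvectors $\vv$ and $w$ of $M_1$, which form a basis of row vectors by Lemma \ref{lem:H-orth} (since $v_1v_2\ne 0$, the vector $\vv=v$ and the vector $w$ are linearly independent). The one preliminary observation that makes everything fit together is that the involution $\bu^\vee$ merely flips the signs of $a_1,a_2,b_1$, so it fixes any vector whose entries are constants; in particular $\vv^\vee=\vv$. Consequently Lemma \ref{lem:H-orth} may be read as $vH\tr v^\vee=\vv H\tr\vv\ne 0$ (so the scalar in the definition of $N$ is legitimate) and $wH\tr v^\vee=wH\tr\vv=0$.

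With this in hand the verification is a short computation for each eigenvector. First I would multiply $\vv$ into $N$ on the right: since $\vv H\tr\vv$ is a scalar, $\vv(H\tr\vv\,\vv)=(\vv H\tr\vv)\vv$, and the coefficient was chosen precisely so that $\frac{1-\l}{\vv H\tr\vv}(\vv H\tr\vv)=1-\l$; hence $\vv N=\vv-(1-\l)\vv=\l\vv$, matching $\vv M_1=\l\vv$. Next I would do the same for $w$: here $wN=w-\frac{1-\l}{\vv H\tr\vv}(wH\tr\vv)\vv$, and the inner product $wH\tr\vv$ vanishes by the reformulated Lemma \ref{lem:H-orth}, so $wN=w=wM_1$.

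Finally, since $\vv(N-M_1)=0$ and $w(N-M_1)=0$ while $\vv$ and $w$ span the space of row vectors, every row vector is annihilated by $N-M_1$, forcing $N-M_1=0$, that is $N=M_1$.

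The main obstacle is conceptual rather than computational: one has to keep straight that $v=\vv$ and $w$ are \emph{left} eigenvectors, acting by right multiplication consistently with $\vv M_1=\l\vv$ and $wM_1=w$, and that the orthogonality supplied by Lemma \ref{lem:H-orth} is taken with respect to the twisted pairing $x,y\mapsto xH\tr y^\vee$. The whole argument hinges on the coincidence $\vv^\vee=\vv$, which converts the twisted relation $wH\tr v^\vee=0$ into the untwisted $wH\tr\vv=0$ needed to annihilate the contribution of the $w$-direction; once this is noticed, no genuine difficulty remains.
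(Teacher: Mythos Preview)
Your proof is correct and follows essentially the same approach as the paper: define the candidate matrix, verify that $\vv$ is a $\lambda$-eigenvector and that any $w$ with $wH\tr\vv=0$ is a $1$-eigenvector, then conclude by comparing eigenspaces. The one point you make explicit that the paper leaves tacit is the identity $\vv^\vee=\vv$, which is indeed what converts the relation $wH\tr v^\vee=0$ of Lemma~\ref{lem:H-orth} into the untwisted $wH\tr\vv=0$ used in the computation; note, however, that the linear independence of $\vv$ and $w$ follows simply from their being eigenvectors for the distinct eigenvalues $\lambda\neq 1$, not from $v_1v_2\neq 0$ (that nonvanishing is what allows the normalization $v=\vv$).
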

\begin{proof}
We set 
$$M_1'=id_2-\frac{1-\l}{\vv  H\tr \vv }
H\tr \vv  \vv .$$
We show that the eigenspaces of $M_1'$ coincides with 
those of $M_1$.
We have
\begin{eqnarray*}
\vv M_1'&=&\vv \Big(id_2-\frac{1-\l}{\vv  H\tr \vv }
H\tr \vv  \vv \Big)\\
&=&\vv -(1-\l)\vv 
=\l \vv ,
\end{eqnarray*}
which means $\vv $ is an eigenvector of $M_1'$ of eigenvalue $\l$.
Let $w$ be a vector satisfying $wH\tr \vv =0$. Then we have 
\begin{eqnarray*}
wM_1'&=&w\Big(id_2-\frac{1-\l}{\vv  H\tr \vv }
H\tr \vv  \vv \Big)\\
&=&w-\frac{(1-\l)w H\tr \vv }{\vv  H\tr \vv }\vv 
=w,
\end{eqnarray*}
which means $w$ is an eigenvector of $M_1'$ of eigenvalue $1$.
Since $M_1$ and $M_1'$ have the same eigenspaces, 
they coincide as matrices.
\end{proof}

We regard the diagonal entries of $H$ as indeterminants 
in the expression of $M_1$ in Lemma \ref{lem:ref}. 
By evaluating them, we determine the circuit matrix $M_1$.
Note that the expression of $M_1$ in Lemma \ref{lem:ref} 
is invariant under a scalar multiple to $H$.
We can assume that 
$$
H=\begin{pmatrix}
1 & 0 \\
0 & h
\end{pmatrix}.
$$
\begin{proposition}
\label{prop:H-M1}
We have 
\begin{eqnarray*}
h&=&-\frac{(B_1-A_1)(B_1-A_2)}
{B_1(A_1-1)(A_2-1)},
\\
M_1&=&id_2- 
\left(\begin{array}{cc}
\dfrac{B_1(A_1- 1)(A_2- 1)}{A_1A_2(B_1- 1)} & 
\dfrac{B_1(A_1- 1)(A_2- 1)}{A_1A_2(B_1- 1)}
\\[5mm]
\dfrac{(B_1- A_1)(B_1- A_2)}
{A_1A_2(B_1- 1)}& 
\dfrac{(B_1- A_1)(B_1- A_2)}
{A_1A_2(B_1- 1)}
%{\ex(a_1+a_2+ b_1)- \ex(a_1+a_2)}
\end{array}
\right).
\end{eqnarray*}
\end{proposition}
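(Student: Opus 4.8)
The plan is to insert the normalized Gram matrix $H=\diag(1,h)$, fixed just above the statement, into the reflection formula of Lemma~\ref{lem:ref} and then to pin down the single scalar $h$ from the Riemann scheme at $x=\infty$. With $\vv=(1,1)$ one has $\vv H\tr\vv=1+h$ and $H\tr\vv\vv=\begin{pmatrix}1&1\\h&h\end{pmatrix}$, so Lemma~\ref{lem:ref} gives
$$M_1=id_2-\frac{1-\l}{1+h}\begin{pmatrix}1&1\\h&h\end{pmatrix},\qquad \l=\frac{B_1}{A_1A_2}.$$
This one-parameter normalization is legitimate because the expression in Lemma~\ref{lem:ref} is unchanged under scaling $H$, so only the ratio $h$ survives as an unknown.

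Next I would bring in the remaining circuit matrix through the relation $M_\infty^{-1}=M_0M_1$ already used in Lemma~\ref{lem:H-orth}. By the Riemann scheme $(\ref{eq:R-scheme})$ the matrix $M_\infty$ has eigenvalues $A_1,A_2$, hence $M_0M_1$ has eigenvalues $A_1^{-1},A_2^{-1}$. The determinant carries no information: from Lemma~\ref{lem:M0} and $\det M_1=\l$ (the product of the eigenvalues $1$ and $\l$) one gets $\det(M_0M_1)=B_1^{-1}\l=1/(A_1A_2)$ automatically. The whole content therefore lies in the trace, and computing $M_0M_1$ from Lemma~\ref{lem:M0} and the display above, I would impose
$$\Tr(M_0M_1)=(1-\mu)+\frac{1-\mu h}{B_1}=\frac{A_1+A_2}{A_1A_2},\qquad \mu=\frac{1-\l}{1+h},$$
which is a single linear equation in $h$ and so determines it uniquely.

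Solving this equation is the only genuine computation. After clearing denominators and collecting the coefficient of $h$, the simplification turns on the two factorizations $A_1A_2-(A_1+A_2)+1=(A_1-1)(A_2-1)$ and $B_1^2-(A_1+A_2)B_1+A_1A_2=(B_1-A_1)(B_1-A_2)$, which recast numerator and denominator in product form and yield the claimed value of $h$. Substituting it back, the key intermediate identity is $1+h=\dfrac{(B_1-1)(A_1A_2-B_1)}{B_1(A_1-1)(A_2-1)}$; together with $1-\l=(A_1A_2-B_1)/(A_1A_2)$ it makes the factor $A_1A_2-B_1$ cancel, so that the top row $(\mu,\mu)$ and bottom row $(\mu h,\mu h)$ of the matrix subtracted from $id_2$ reduce to the entries displayed in the statement.

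The main obstacle is purely bookkeeping: spotting the two factorizations at the right moment and cancelling the common factor $A_1A_2-B_1$ so that the rational functions in $A_1,A_2,B_1$ collapse to the compact products claimed. As a final consistency check I would verify that the resulting matrix satisfies $\Tr M_1=1+\l$ and $\det M_1=\l$ identically in $A_1,A_2,B_1$, confirming that its eigenvalues are $1$ and $\l$ in agreement with Lemma~\ref{lem:ref}.
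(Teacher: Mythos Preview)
Your proposal is correct and follows essentially the same route as the paper: determine $h$ by imposing $\Tr(M_0M_1)=A_1^{-1}+A_2^{-1}$ from the Riemann scheme, solve the resulting linear equation in $h$, and substitute back into the reflection formula of Lemma~\ref{lem:ref}. Your added remarks on why the determinant carries no information, on the cancellation of $A_1A_2-B_1$ (which the paper records separately as Remark~\ref{rm:trace1}), and on the trace/determinant consistency check are sound embellishments of the same argument.
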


\begin{proof}
We compute the trace of $M_0M_1$, which should be $1/A_1+1/A_2$ 
by the Riemann scheme (\ref{eq:R-scheme}).  
Since 
$$M_0M_1=
\left(\begin{array}{cc}
1 & 0\\
0 & B_1^{-1}
\end{array}\right)
-
\frac{1-\l}{1+h}
\left(\begin{array}{cc}
1 & 1 \\ B_1^{-1}h& B_1^{-1}h
\end{array}\right),
$$
we have 
\begin{eqnarray*}
\mathrm{tr}(M_0M_1)&=&
1+B_1^{-1}+\frac{(\l-1)(1+B_1^{-1}h)}
{1+h}\\
&=&\frac{(A_1A_2+1)B_1h+A_1A_2+B_1^2}
{A_1A_2B_1(1\!+\!h)}
=\frac{1}{A_1}\!+\!\frac{1}{A_2}.
\end{eqnarray*}
We can reduce the last equation to a linear equation with respect to $h$,  
which is solved as
$$
h=-\frac{(A_1-B_1)(A_2-B_1)}
{B_1(A_1-1)(A_2-1)}.
%h=-\frac{(\mathbf{e}(c)-\mathbf{e}(a))(\mathbf{e}(c)-\mathbf{e}(b))}
%{\mathbf{e}(c)(\mathbf{e}(a)-1)(\mathbf{e}(b)-1)}.
$$
We obtain the expression of $M_1$ by the 
substitution of this solution into Lemma \ref{lem:ref}.
\end{proof}

\begin{remark}
\label{rm:trace1}
Note that 
$$\vv H\tr \vv=\Tr(H)=\frac{(A_1A_2-B_1)(B_1-1)}{(A_1-1)(A_2-1)B_1}.$$
We have 
$$\frac{1-\l}{\vv H\tr \vv}=
\frac{A_1A_2-B_1}{A_1A_2}\times 
\frac{(A_1-1)(A_2-1)B_1}{(A_1A_2-B_1)(B_1-1)}
=\frac{(A_1-1)(A_2-1)B_1}{A_1A_2(B_1-1)},
$$
in which the factor $A_1A_2-B_1$ is canceled.
\end{remark}

We conclude this subsection by the following.
\begin{theorem}
Suppose the non-integral condition (\ref{eq:non-integral}) for 
$a_1,a_2$ and $b_1$. 
Then there exists a fundamental system $\bF_2(x)$ of 
solutions to the hypergeometric differential equation (\ref{eq:HGDE}) 
around $\dot x=\e$ such that 
the circuit matrix $M_0$ and 
$M_1$ along the loops $\rho_0$ and $\rho_1$ in (\ref{eq:loops})
are expressed as
$$M_0=\begin{pmatrix}
1 & 0\\
0 & B_1^{-1}
\end{pmatrix},\quad 
M_1=id_2-\frac{1-\l}{\vv  H \tr \vv }H\tr \vv  \vv ,
$$
where $\e$ is a sufficiently small positive real numbers, 
$A_1=e^{2\pi\sqrt{-1}a_1}$, $A_2=e^{2\pi\sqrt{-1}a_2}$,  
$B_1=e^{2\pi\sqrt{-1}b_1}$,  $\l=B_1/(A_1A_2)$, $\vv =(1,1)$ and 
$$H=
\begin{pmatrix}
1 & 0\\
0 & 
-\dfrac{(A_1-B_1)(A_2-B_1)}
{B_1(A_1-1)(A_2-1)}
\end{pmatrix}.
$$
\end{theorem}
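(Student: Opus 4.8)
The plan is to assemble the statement directly from the lemmas and the proposition already established, since every ingredient is in place. First I would fix the constants $g_1,g_2$ in (\ref{eq:basis}) by the normalization performed just before Lemma \ref{lem:ref}. The only point requiring care here---and the closest thing to an obstacle---is that this normalization is legitimate: scaling the two entries of $\bF^g_2(x)$ independently lets me send the $\l$-eigenvector $v=(v_1,v_2)$ of $M^g_1$ to $\vv=(1,1)$ precisely because $v_1v_2\ne 0$, which is exactly the content of Lemma \ref{lem:H-orth}. This choice defines $\bF_2(x)$ and thereby fixes $M_0,M_1,M_\infty$.

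With $\bF_2(x)$ chosen, the formula for $M_0$ is immediate from Lemma \ref{lem:M0}, which holds for any non-zero $g_1,g_2$ and so in particular for the normalized pair. For $M_1$ I would invoke Lemma \ref{lem:ref}, which yields exactly the reflection form $M_1=id_2-\frac{1-\l}{\vv H\tr\vv}H\tr\vv\vv$ with $\l=B_1/(A_1A_2)$. It then remains only to identify the invariant form $H$: by Lemma \ref{lem:diag} it is diagonal, and since the reflection expression is unchanged under scaling $H$ I may set its $(1,1)$-entry to $1$ and write $H=\diag(1,h)$. Proposition \ref{prop:H-M1} supplies $h=-\frac{(A_1-B_1)(A_2-B_1)}{B_1(A_1-1)(A_2-1)}$, which is precisely the $(2,2)$-entry of the matrix $H$ in the statement.

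Finally I would confirm that the non-integral hypothesis (\ref{eq:non-integral}) makes every quantity well-defined and non-degenerate. The conditions $a_1,a_2,b_1\notin\Z$ give $A_1,A_2,B_1\ne 1$, so the denominator $B_1(A_1-1)(A_2-1)$ of $h$ does not vanish; the conditions $a_1-b_1,a_2-b_1\notin\Z$ force $A_1,A_2\ne B_1$, whence $h\ne 0$ and $H\in GL_2(\C)$, as required by Lemma \ref{lem:invariant}; and $a_1+a_2-b_1\notin\Z$ gives $\l\ne 1$, which makes $M_1$ a genuine reflection and, together with $b_1\notin\Z$, keeps the denominator $\vv H\tr\vv=\Tr(H)$ non-zero by Remark \ref{rm:trace1}. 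The theorem is then a direct transcription of Lemmas \ref{lem:M0} and \ref{lem:ref} together with Proposition \ref{prop:H-M1}.
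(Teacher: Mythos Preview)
Your proposal is correct and matches the paper's approach: the theorem is stated without proof as a direct summary of the preceding results, and you have identified precisely the ingredients (Lemmas \ref{lem:M0}, \ref{lem:diag}, \ref{lem:H-orth}, \ref{lem:ref} and Proposition \ref{prop:H-M1}) that assemble into the statement, together with the role of each non-integrality hypothesis.
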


\section{Monodromy representation of $_{p}F_{p-1}$}
\subsection{Generalized hypergeometric differential equation}
The generalized hypergeometric series is defined by 
$$\HGF{p}{p-1}{a_1,\dots,a_p}{b_1\dots,b_{p-1}}{x}
=\sum_{n=0}^\infty \frac{(a_1,n)\cdots (a_p,n)}
{(b_1,n)\cdots(b_{p-1},n)(1,n)}x^n,
$$
where the main variable $x$ is in $\{x\in \C\mid |x|<1\}$, 
$a_1,\dots,a_p,b_1,\dots,b_{p-1}$ are complex parameters with 
$b_1,\dots,b_{p-1}\notin -\N$. 
This series satisfies the differential equation of rank $p$:
\begin{eqnarray}
\label{eq:gen-HGDE}
& &(x\frac{d}{dx}+a_1)\cdots(x\frac{d}{dx}+a_{p})f(x)\\
\nonumber
&=&\frac{d}{dx}(x\frac{d}{dx}+b_1-1)\cdots(x\frac{d}{dx}+b_{p-1}-1)f(x).
\end{eqnarray}
This is a Fuchsian differential equation with regular singular points
at $x=0,1,\infty$. 
The Riemann scheme of (\ref{eq:gen-HGDE})  is 
\begin{equation}
\label{eq:GR-scheme}
\begin{array}{ccc}
x=0      & x=1    & x=\infty\\
\hline
0        & 0      &a_1\\
1-b_1    & 1      &a_2\\
\vdots   & \vdots &\vdots\\
1-b_{p-2}&p-2     & a_{p-1}\\
1-b_{p-1}&\sum_{j=1}^{p-1} b_j- \sum_{i=1}^{p} a_i & a_p
%1-b_{2}&b_1+b_2-a_1-a_2-a_3& a_3
\end{array} 
\end{equation}
and a fundamental system of solutions to (\ref{eq:gen-HGDE}) 
for $b_1,\dots,b_{p-1}\notin \Z$ 
%around each singular point is given as follows:
around $\dot x=\e$ is given by 
\begin{equation}
\label{eq:F-S-GHGDE}
%\bF_p(x)=
\begin{pmatrix}
\HGF{p}{p-1}{a_1,\dots,a_p}{b_1,\dots,b_{p-1}}{x}\\
 x^{1-b_1}\HGF{p}{p-1}{a_1-b_1+1,\dots,a_p-b_1+1}{2-b_1,b_2-b_1+1,\dots,
b_{p-1}-b_1+1}{x}\\
\vdots\\
 x^{1-b_{p-1}}\HGF{p}{p-1}{a_1-b_{p-1}+1,\dots,a_p- b_{p-1}+1}
{b_1\!-\! b_{p-1}\!+\!1,\dots,b_{p-2}\!-\!b_{p-1}\!+\!1,2\!-\! b_{p-1}}{x}\\
\end{pmatrix},
\end{equation}
where $\e$ is a sufficiently small positive real number.
Note that there are $p-1$ linearly independent holomorphic solutions 
to (\ref{eq:gen-HGDE}) on an annulus $\{x\in \C \mid 0<|x-1|<\e\}$.

% the vector space of single-valued holomorphic solutions to 
% (\ref{eq:gen-HGDE}) 
% on an annulus $\{x\in \C \mid 0<|x-1|<\e\}$ is $p-1$ dimensional.

\subsection{Circuit matrices $M_0$ and $M_1$}
In this subsection, we assume that 
\begin{equation}
\label{eq:non-int}
a_i,\ b_j,\ a_i-b_j,\ b_j-b_{j'},\ 
\sum_{i=1}^p a_i-\sum_{j=1}^{p-1} b_j \notin \Z, 
\end{equation}
where $1\le i\le p$,  $1\le j,j'\le p-1$ and $j\ne j'$.
We set 
$$A_i=\exp(2\pi\sqrt{-1}a_i),\quad B_j=\exp(2\pi\sqrt{-1}b_j),$$ 
for $1\le i\le p$ and  $1\le j\le p-1$.
We choose a fundamental system $\bF_p^g(x)$ 
of solutions to (\ref{eq:gen-HGDE}) around $\dot x=\e$ 
as the left multiplication of the diagonal matrix 
$$
g=\begin{pmatrix}
g_1&   &   &  \\
   &g_2&   &  \\
   &   &\ddots&   \\
   &   &      &g_p
\end{pmatrix}\in GL_p(\C)
$$
to the column vector (\ref{eq:F-S-GHGDE}). 

Let $M_0^g$ and $M_1^g$ be the circuit matrices 
along the loops $\rho_0$ and $\rho_1$ in (\ref{eq:loops})
with respect to $\bF_p^g(x)$. 
We set $M^g_\infty=(M^g_0M^g_1)^{-1}$.
\begin{lemma}
\label{lem:GM0}
For any diagonal matrix $g\in GL_p(\C)$, the circuit matrix $M^g_0$ is
$$\begin{pmatrix}
1&   &   &  \\
  &B_1^{-1}&   &  \\
   &   &\ddots&   \\
   &   &      &B_{p-1}^{-1}
\end{pmatrix}.
$$
\end{lemma}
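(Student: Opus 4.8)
The plan is to mirror the structure of Lemma~\ref{lem:M0}, the rank-$2$ case, and read off the circuit matrix $M_0^g$ directly from the explicit fundamental system $(\ref{eq:F-S-GHGDE})$. The key observation is that $\rho_0$ is the loop $t\mapsto \e e^{2\pi\sqrt{-1}t}$ encircling $x=0$ once positively, and that each entry of $(\ref{eq:F-S-GHGDE})$ has a completely transparent behavior under the substitution $x\mapsto e^{2\pi\sqrt{-1}}x$ that $\rho_0$ induces. First I would note that the hypergeometric series ${}_pF_{p-1}$ appearing in each entry is a single-valued holomorphic function on the disk $\{|x|<1\}$, so it is invariant under analytic continuation along $\rho_0$. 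Hence the only source of monodromy is the prefactor $x^{1-b_k}$ in the $(k+1)$-st entry (for $1\le k\le p-1$), while the top entry has no such prefactor.

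The main computation is then trivial: under $\rho_0$, the factor $x^{1-b_k}$ is multiplied by $\exp\bigl(2\pi\sqrt{-1}(1-b_k)\bigr)=\exp(2\pi\sqrt{-1})\exp(-2\pi\sqrt{-1}b_k)=B_k^{-1}$, since $\exp(2\pi\sqrt{-1})=1$. The top entry, carrying no prefactor, returns to itself with multiplier $1$. Because the left multiplication by the diagonal matrix $g$ only rescales each row by a nonzero constant $g_j$ and commutes with the diagonal monodromy action, the constants $g_1,\dots,g_p$ cancel out and play no role in $M_0^g$. Therefore the analytic continuation of $\bF_p^g(x)$ along $\rho_0$ equals $\diag(1,B_1^{-1},\dots,B_{p-1}^{-1})\,\bF_p^g(x)$, which is precisely the asserted diagonal matrix.

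The one point that requires a word of care, rather than an obstacle, is the justification that each ${}_pF_{p-1}$ series is genuinely single-valued along $\rho_0$ and that the prefactors behave multiplicatively under continuation without cross-terms between the rows. This follows because the $p$ functions in $(\ref{eq:F-S-GHGDE})$ are chosen to be the canonical Frobenius solutions at the regular singular point $x=0$, whose exponents are $0,\,1-b_1,\dots,1-b_{p-1}$ as recorded in the Riemann scheme $(\ref{eq:GR-scheme})$; the non-integrality condition $(\ref{eq:non-int})$, in particular $b_j-b_{j'}\notin\Z$ and $b_j\notin\Z$, guarantees that these exponents are pairwise incongruent modulo $\Z$ and none is an integer, so no logarithmic terms appear and each solution is a pure power $x^{\,1-b_k}$ times a convergent holomorphic series. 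Consequently the monodromy at $x=0$ is diagonal in this basis, acting on the $k$-th Frobenius solution simply by the eigenvalue $\exp\bigl(2\pi\sqrt{-1}(1-b_k)\bigr)$. I would present this as the natural $p$-variable generalization of Lemma~\ref{lem:M0}, emphasizing that the diagonal matrix $g$ drops out because diagonal matrices commute, so the result is independent of the choice of $g\in GL_p(\C)$.
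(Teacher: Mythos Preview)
Your argument is correct and is exactly the reasoning behind the paper's one-line proof ``It is clear by (\ref{eq:F-S-GHGDE})'': each entry of (\ref{eq:F-S-GHGDE}) is $x^{1-b_k}$ times a single-valued holomorphic series, so continuation along $\rho_0$ multiplies it by $e^{2\pi\sqrt{-1}(1-b_k)}=B_k^{-1}$, and the diagonal $g$ commutes with this and cancels. You have simply spelled out in full what the paper leaves as an evident observation.
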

\begin{proof}
It is clear by (\ref{eq:F-S-GHGDE}). 
\end{proof}

As is in subsection \ref{ss:CMG}, we have the following lemma.
\begin{lemma}
\label{lem:G-invariant}
Let $M_\rho^g$ be the circuit matrix 
along $\rho \in \pi_1(X,\dot x)$ with respect to $\bF_p^g(x)$. Then 
there exists a diagonal matrix $H\in GL_p(\C)$ such that 
$$M^g_\rho H \tr (M^g_\rho)^\vee =H,$$
where $z(a_1,\dots,a_p,b_1,\dots,b_{p-1})^\vee=
z(-a_1,\dots,-a_p,-b_1,\dots,-b_{p-1})$ for any function $z$ of 
the parameters.
% and $Z^\vee=(z_{ij}^\vee)$ for a matrix $Z=(z_{ij})$.
\end{lemma}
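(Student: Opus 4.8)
The plan is to mimic the rank-$2$ argument of Lemma~\ref{lem:invariant} and Lemma~\ref{lem:diag}, now carried out for the rank-$p$ system. The statement bundles two assertions: first, that an invariant Hermitian-type form $H$ exists (the analogue of Lemma~\ref{lem:invariant}), and second, that it is diagonal (the analogue of Lemma~\ref{lem:diag}). As the excerpt says ``as is in subsection~\ref{ss:CMG}'', I would establish existence exactly as before: using an Euler type integral representation of solutions to (\ref{eq:gen-HGDE}), one produces a pairing between the solution space with parameters $(a_i,b_j)$ and that with parameters $(-a_i,-b_j)$, invariant under the monodromy. Concretely, the twisted cycle/cocycle intersection pairing, or the explicit construction in Lemma~5.2 of \cite{M}, yields a matrix $H\in GL_p(\C)$ satisfying $M^g_\rho H\,\tr(M^g_\rho)^\vee=H$ for every $\rho$. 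This step I would treat as known, citing \cite{M}, since the $p=2$ case already invokes it verbatim.

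The substantive point is diagonality. Here I would argue from the single circuit matrix $M^g_0$, exactly paralleling the proof of Lemma~\ref{lem:diag}. By Lemma~\ref{lem:GM0}, $M^g_0=\diag(1,B_1^{-1},\dots,B_{p-1}^{-1})$, and its conjugate-dual is $\tr(M^g_0)^\vee=\diag(1,B_1,\dots,B_{p-1})$, since applying $\vee$ sends each $b_j$ to $-b_j$ and hence each $B_j^{-1}$ back to $B_j$. Writing $H=(h_{k\ell})$ and imposing $M^g_0 H\,\tr(M^g_0)^\vee=H$, the $(k,\ell)$ entry of the left side is $\mu_k h_{k\ell}\mu_\ell$, where $(\mu_1,\dots,\mu_p)=(1,B_1^{-1},\dots,B_{p-1}^{-1})$. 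Thus the invariance forces $\mu_k\mu_\ell\,h_{k\ell}=h_{k\ell}$ for all $k,\ell$, so $h_{k\ell}=0$ whenever $\mu_k\mu_\ell\ne 1$.

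It remains to check that $\mu_k\mu_\ell\ne 1$ for every off-diagonal pair $(k,\ell)$, $k\ne\ell$. The diagonal multipliers are $1,B_1^{-1},\dots,B_{p-1}^{-1}$, so for indices $k,\ell\ge 2$ corresponding to $b_{k-1},b_{\ell-1}$ the product is $B_{k-1}^{-1}B_{\ell-1}^{-1}$; this equals $1$ only if $b_{k-1}+b_{\ell-1}\in\Z$. For the pairs involving the first index, $\mu_1\mu_\ell=B_{\ell-1}^{-1}$, which is $1$ only if $b_{\ell-1}\in\Z$. The non-integrality hypothesis (\ref{eq:non-int}) rules out $b_j\in\Z$ and $b_j-b_{j'}\in\Z$, but as stated it does \emph{not} directly forbid $b_j+b_{j'}\in\Z$; so the genuine obstacle is that a naive appeal to (\ref{eq:non-int}) leaves the sum-of-two-$b$'s condition uncovered. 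I expect the resolution is that the intended hypothesis (and the correct normalization coming from the integral pairing) pairs solutions with parameters $b_j$ against $-b_j$, so that the relevant conjugate multiplier attached to index $\ell$ is actually $B_{\ell-1}^{+1}$ rather than $B_{\ell-1}^{-1}$; then the off-diagonal condition becomes $B_{k-1}^{-1}B_{\ell-1}\ne 1$, i.e.\ $b_{k-1}-b_{\ell-1}\notin\Z$, which (\ref{eq:non-int}) does guarantee. So the main thing to get right is the precise form of the $\vee$-involution and of $\tr(M^g_0)^\vee$, ensuring the exponents combine as differences $b_j-b_{j'}$ (covered by the hypothesis) and not as sums; once the multipliers are pinned down, diagonality is immediate and the argument is otherwise a direct transcription of Lemma~\ref{lem:diag} to size $p$.
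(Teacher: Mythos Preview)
Your approach is exactly what the paper intends: it offers no separate proof, only the remark ``as is in subsection~\ref{ss:CMG}'', meaning one repeats Lemmas~\ref{lem:invariant} and~\ref{lem:diag} verbatim in rank $p$. So the plan is right.

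The confusion you flag about $b_j+b_{j'}$ is self-inflicted, however, and stems from a slip between two consecutive lines. You correctly compute $\tr(M_0^g)^\vee=\diag(1,B_1,\dots,B_{p-1})$, but in the very next sentence you write the $(k,\ell)$ entry of $M_0^g\,H\,\tr(M_0^g)^\vee$ as $\mu_k h_{k\ell}\mu_\ell$ with $\mu_\ell=B_{\ell-1}^{-1}$. That is wrong: the right-hand factor contributes the $\ell$-th diagonal entry of $\tr(M_0^g)^\vee$, which is $B_{\ell-1}$, not $B_{\ell-1}^{-1}$. The correct entry is
\[
B_{k-1}^{-1}\,h_{k\ell}\,B_{\ell-1}\qquad(\text{with }B_0=1),
\]
so the invariance $M_0^g H\,\tr(M_0^g)^\vee=H$ forces $h_{k\ell}=0$ whenever $B_{k-1}^{-1}B_{\ell-1}\ne 1$, i.e.\ whenever $b_{k-1}-b_{\ell-1}\notin\Z$ (for $k,\ell\ge 2$) or $b_{\ell-1}\notin\Z$ (for $k=1$). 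Both are covered by (\ref{eq:non-int}); no condition on sums $b_j+b_{j'}$ ever arises. Your closing paragraph in effect rediscovers this, but you can remove the detour entirely by using the matrix $\tr(M_0^g)^\vee$ you had already written down.
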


The matrix $H$ depends on the ratio of $g_1$ and $g_2$. 
We treat the entries of $H$ as indeterminants. 

By the Riemann scheme (\ref{eq:GR-scheme}) and our assumption
(\ref{eq:non-int}), the eigenvalues of $M_1^g$ are $1$ and 
$$
\l=\Big(\prod_{j=1}^{p-1}B_j\big)\Big/
\Big(\prod_{i=1}^p A_i\Big);
$$
the eigenspace of $M_1^g$ of eigenvalue $1$ is $p-1$ dimensional and 
that of eigenvalue $\l$ is one dimensional.

\begin{lemma}
\label{lem:GH-orth}
Let $v=(v_1,\dots,v_p)$ be an eigenvector of $M_1^g$ of eigenvalue $\l$. 
Then the eigenspace of $M^g_1$ of eigenvalue $1$ is characterized as 
$$\{w\in \C^p\mid wH \tr v^\vee =0\}.$$
Moreover,  the vector $v$ satisfies 
$$vH \tr v^\vee \ne 0.$$
\end{lemma}
\begin{proof}
Trace the proof of Lemma \ref{lem:H-orth}.
\end{proof}

\begin{lemma}
\label{lem:GM1}
Let $v=(v_1,\dots,v_p)$ be an eigenvector of $M_1^g$ of eigenvalue $\l$. 
Then the circuit matrix $M^g_1$ is expressed as 
$$M_1^g=id_p-\frac{1-\l}{v H\tr v^\vee }H\tr v^\vee v.$$
Moreover, none of $v_1,\dots,v_p$  vanishes.
\end{lemma}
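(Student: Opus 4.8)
The plan is to prove the two assertions separately, handling the reflection formula exactly as in Lemma \ref{lem:ref} and reserving the real work for the non-vanishing statement, which generalizes the $p=2$ argument of Lemma \ref{lem:H-orth}.

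For the formula, I would set $M_1'=id_p-\frac{1-\l}{vH\tr v^\vee}H\tr v^\vee v$ and check that $M_1'$ and $M_1^g$ share all eigenspaces. A one-line computation gives $vM_1'=v-(1-\l)v=\l v$, so $v$ spans the $\l$-eigenline. For any $w$ with $wH\tr v^\vee=0$ one finds $wM_1'=w$, so the hyperplane $W=\{w:wH\tr v^\vee=0\}$ is fixed pointwise. By Lemma \ref{lem:GH-orth} this hyperplane is exactly the $(p-1)$-dimensional $1$-eigenspace of $M_1^g$, and $v$ is its $\l$-eigenvector; since each of $M_1^g$ and $M_1'$ is determined by its action on the direct sum decomposition $\C^p=\C v\oplus W$, they coincide. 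The only input needed is $vH\tr v^\vee\ne0$, already supplied by Lemma \ref{lem:GH-orth}, which also guarantees the formula is well defined.

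For the non-vanishing of the $v_k$, the key observation is that the Gram matrix $H=\diag(h_1,\dots,h_p)$ is diagonal (Lemma \ref{lem:G-invariant}) and that $v_k=0$ forces $v_k^\vee=0$, since the $\vee$ operation only negates parameters. Hence, if some $v_k=0$, the standard basis row vector $e_k$ satisfies $e_kH\tr v^\vee=h_kv_k^\vee=0$, so by the characterization in Lemma \ref{lem:GH-orth} the vector $e_k$ lies in the $1$-eigenspace of $M_1^g$, i.e. $e_kM_1^g=e_k$. Because $M_0^g$ is the diagonal matrix of Lemma \ref{lem:GM0}, $e_k$ is simultaneously an eigenvector of $M_0^g$ with eigenvalue $c_k$, where $c_1=1$ and $c_k=B_{k-1}^{-1}$ for $k\ge2$. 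Therefore $e_kM_0^gM_1^g=c_ke_k$, so $c_k$ is an eigenvalue of $M_0^gM_1^g=(M_\infty^g)^{-1}$.

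I would then compare this with the Riemann scheme \eqref{eq:GR-scheme}. A determinant check, $\det(M_0^gM_1^g)=\big(\prod_j B_j^{-1}\big)\l=\prod_i A_i^{-1}$, fixes the convention: the eigenvalues of $(M_\infty^g)^{-1}=M_0^gM_1^g$ are $A_1^{-1},\dots,A_p^{-1}$. Thus $c_k\in\{A_1^{-1},\dots,A_p^{-1}\}$. If $k=1$ this reads $A_i=1$ for some $i$, i.e. $a_i\in\Z$; if $k\ge2$ it reads $B_{k-1}=A_i$, i.e. $a_i-b_{k-1}\in\Z$. Both are forbidden by \eqref{eq:non-int}, and this contradiction shows $v_k\ne0$ for every $k$. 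The main obstacle is precisely this last step: everything hinges on correctly pinning down the eigenvalues of $M_0^gM_1^g$ via the determinant and on the bookkeeping that matches the forced eigenvalue $c_k$ to exactly one of the excluded linear relations among the parameters, which is where the full strength of the non-integrality hypothesis \eqref{eq:non-int} is consumed.
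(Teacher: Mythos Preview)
Your proof follows the paper's line: the reflection formula is checked on the same eigenspace decomposition, and the non-vanishing argument aims at the same contradiction, namely that $v_k=0$ would force one of $1,B_1^{-1},\dots,B_{p-1}^{-1}$ to occur among the eigenvalues $A_1^{-1},\dots,A_p^{-1}$ of $M_0^gM_1^g$, against \eqref{eq:non-int}.

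One step, however, does not go through as written. You assert that $v_k=0$ forces $v_k^\vee=0$ ``since the $\vee$ operation only negates parameters,'' and use this to place the row vector $e_k$ in the $1$-eigenspace via $e_kH\tr v^\vee=h_kv_k^\vee=0$. But in the characterization of Lemma~\ref{lem:GH-orth} the column $\tr v^\vee$ is determined by the requirement $\tr(M_1^g)^\vee\,\tr v^\vee=\l^\vee\,\tr v^\vee$; that is, $v^\vee$ is the $\l^\vee$-eigenvector for the system at the \emph{negated} parameter point, and the vanishing of the $k$-th coordinate of $v$ at $(a,b)$ says nothing about the $k$-th coordinate of the eigenvector at $(-a,-b)$. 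The paper sidesteps this by reading the conclusion off the $k$-th \emph{column} of $M_1^g$ rather than the row: from the rank-one formula the $(j,k)$ entry of $H\tr v^\vee v$ is $h_jv_j^\vee\cdot v_k$, so $v_k=0$ alone annihilates the whole $k$-th column, giving $M_1^g\,\tr e_k=\tr e_k$ and hence $M_0^gM_1^g\,\tr e_k=c_k\,\tr e_k$. With this swap from left to right eigenvector your contradiction is intact, and the argument then coincides with the paper's.
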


\begin{proof}
We set 
$$M_1'=id_p-\frac{1-\l}{v H\tr v^\vee}H\tr v^\vee v.$$
We show that the eigenspaces of $M_1'$ coincides with those of $M^g_1$.
Note that 
\begin{eqnarray*}
vM_1'&=&v\left(id_p-\frac{1-\l}{v H\tr v^\vee}H\tr v^\vee v\right)
     =v-(1-\l)v=\l v,\\
wM_1'&=&w\left(id_p-\frac{1-\l}{v H\tr v^\vee}H\tr v^\vee v\right)
     =w-\frac{(1-\l)w H\tr v^\vee}{v H\tr v^\vee} v=w,
\end{eqnarray*}
for any element $w$ satisfying $w H\tr v^\vee=0$.
By Lemma \ref{lem:GH-orth}, we have $M_1'=M^g_1$.

Suppose that $v_i=0$. Then the matrix $M_1^g$ takes the form
% $$
% \bordermatrix{
%  & &    & &i&    & & \cr
%  & &    & &0&    & &\cr
%  & &    & &\vdots& &\cr
%  & &    & &0 &   & &\cr
% i&0&\cdots&0&1 &0&\cdots &0\cr
%  & &     & &0 & &    &   \cr
%  & &     & &\vdots& &\cr
%  & &    & &0&    & &\cr
% }
% $$
$$
\bordermatrix{
 &  &i&  \cr
 & *&\tr\mathbf{0}\  &* \cr
i&\mathbf{0}&\; 1 &\mathbf{0'}\cr
 & *&\tr\mathbf{0'}& *\cr
}
$$
by its expression, where $\mathbf{0}$ and $\mathbf{0'}$ are zero vectors. 
Since $M_0^g$ is diagonal, we have 
% $$M_0M_1=
% \bordermatrix{
%  & &    & &i&    & & \cr
%  & &    & &0&    & &\cr
%  & &    & &\vdots& &\cr
%  & &    & &0 &   & &\cr
% i&0&\cdots&0&B_{i-1}^{-1}&0&\cdots &0\cr
%  & &     & &0 & &    &   \cr
%  & &     & &\vdots& &\cr
%  & &    & &0&    & &\cr
% },
% $$
$$M^g_0M^g_1=
\bordermatrix{
 &  &i&  \cr
 &* &\tr\mathbf{0}\ & *\cr
i&\mathbf{0}&B_{i-1}^{-1} &\mathbf{0'}\cr
 &*  &\tr\mathbf{0'}& *\cr
},
$$
where we regard $B_0$ as $1$. Hence $M_\infty$ has an eigenvalue
$B_{i-1}$, which contradicts to the Riemann scheme (\ref{eq:GR-scheme})
under our assumption (\ref{eq:non-int}). Therefore, we have
$v_i\ne 0$ for $1\le i\le p$.
\end{proof}

We choose $g_1,\dots,g_p$ so that the eigenvector of 
eigenvalue $\l$ becomes $\vv=(1,\dots,1)$.
From now on, we fix the constants $g_1,\dots,g_p$ as the above values. 
We denote the fundamental system of solutions to (\ref{eq:gen-HGDE}) around 
$\dot x$ for these constants in $\bF_p^g(x)$ by $\bF_p(x)$.
The circuit matrices with respect to $\bF_p(x)$ are expressed by 
\begin{equation}
\label{eq:GM0GM1}
M_0=\begin{pmatrix}
1 & & & \\
  &B_1^{-1}& & \\
  & &\ddots & \\
  & & & B_{p-1}^{-1}
\end{pmatrix},\quad 
M_1=id_p-\frac{1-\l}{\vv H\tr \vv}H\tr \vv\vv.
\end{equation}
Here we regard the diagonal entries of $H$ as indeterminants 
in the expression of $M_1$.
By evaluating them, we determine the expression of $M_1$. 
Note that the expression of $M_1$ is invariant under a scalar multiple 
to $H$. We can assume that 
\begin{equation}
\label{eq:normal-H}
H=\begin{pmatrix}
1 & & & \\
  &h_1& & \\
  & &\ddots &\\ 
  & & & h_{p-1}
\end{pmatrix}.
\end{equation}
Note that the matrix $H$ is unique after this normalization.

\begin{proposition}
\label{prop:H-GM1}
For $1\le k\le p-1$, we have
$$
h_k=\frac{-\Big(\prod\limits_{1\le j\le p-1}^{j\ne k}
(B_j-1)\Big)\Big(\prod\limits_{i=1}^p(A_i-B_k)\Big)}
{B_k\Big(\prod\limits_{1\le j\le p-1}^{j\ne k}(B_j-B_k)\Big)
\Big(\prod\limits_{i=1}^{p}(A_i-1)\Big)}.
$$
\end{proposition}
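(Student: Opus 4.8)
The plan is to determine $h_1,\dots,h_{p-1}$ by comparing the characteristic polynomial of $M_0M_1$ with the one forced by the Riemann scheme. By the local exponents $a_1,\dots,a_p$ at $x=\infty$ in (\ref{eq:GR-scheme}), the eigenvalues of $M_\infty=(M_0M_1)^{-1}$ are $A_1,\dots,A_p$, so the characteristic polynomial of $M_0M_1$ must equal $\prod_{i=1}^p(t-A_i^{-1})$; for $p=2$ this is precisely the trace identity used in the proof of Proposition \ref{prop:H-M1}. I would index rows and columns by $0,1,\dots,p-1$ and abbreviate $D_0=1$, $D_k=B_k^{-1}$ and $h_0=1$, so that $M_0=\diag(D_0,\dots,D_{p-1})$ by (\ref{eq:GM0GM1}).

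First I would read off the rank-one structure of the product. Since $M_1=id_p-c\,H\tr\vv\vv$ with $c=(1-\l)/(\vv H\tr\vv)$ and $H$ as in (\ref{eq:normal-H}), and the $i$-th row of $H\tr\vv\vv$ is constant equal to $h_i$, one gets
$$M_0M_1=\diag(D_0,\dots,D_{p-1})-c\,(M_0H\tr\vv)\,\vv,$$
where $M_0H\tr\vv$ is the column vector with entries $D_kh_k$. Thus $M_0M_1$ is a diagonal matrix plus a rank-one term, and the matrix determinant lemma gives its characteristic polynomial as
$$\det(t\,id_p-M_0M_1)=\prod_{k=0}^{p-1}(t-D_k)+c\sum_{k=0}^{p-1}D_kh_k\prod_{l\ne k}(t-D_l).$$

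The decisive step is to equate this with $\prod_{i=1}^p(t-A_i^{-1})$ and evaluate at the points $t=D_j$. Under (\ref{eq:non-int}) the $B_j$ are distinct and different from $1$, so $D_0,\dots,D_{p-1}$ are pairwise distinct and all but the $j$-th summand vanish, leaving
$$c\,D_jh_j\prod_{l\ne j}(D_j-D_l)=\prod_{i=1}^p(D_j-A_i^{-1}),\qquad 0\le j\le p-1.$$
Since the two degree-$p$ monic polynomials agree at the $p$ distinct points $D_0,\dots,D_{p-1}$, these equations are equivalent to the full identity and are therefore consistent. Dividing the equation for $j=k$ by the one for $j=0$ (whose right-hand side $\prod_i(1-A_i^{-1})$ is nonzero because $A_i\ne1$) eliminates the unknown scalar $c$ and the factor $h_0=1$, and expresses $h_k$ as a ratio of the remaining products.

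Finally I would substitute $D_0=1$, $D_k=B_k^{-1}$ and simplify. Writing $D_k-D_l=(B_l-B_k)/(B_kB_l)$ and $D_k-A_i^{-1}=(A_i-B_k)/(B_kA_i)$, the factors $\prod_iA_i$ and $\prod_{l\ne k}B_l$ cancel between numerator and denominator, the power of $B_k$ collapses to a single $1/B_k$, and the sign $(1-B_k)/(B_k-1)=-1$ produces the overall minus sign; what survives is exactly the claimed expression for $h_k$. I expect no conceptual obstacle in the argument: the only delicate point is the bookkeeping of the powers of $B_k$ in this last simplification. It is worth noting that $c$, and hence $\l$, never enters the final formula, since it cancels in the ratio.
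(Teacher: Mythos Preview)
Your argument is correct and follows the same overarching idea as the paper---compare the characteristic polynomial of $M_0M_1$ with $\prod_{i=1}^p(t-A_i^{-1})$ dictated by the Riemann scheme---but the execution is genuinely different. The paper evaluates this identity at the eigenvalues $t=1/A_\ell$, which makes the left side vanish and yields, after a hands-on reduction of the determinant, a system of $p$ linear equations in $h_1,\dots,h_{p-1}$; it then \emph{verifies} that the stated formula satisfies this system and invokes the uniqueness of the normalized $H$. You instead exploit the rank-one structure via the matrix determinant lemma and evaluate at the diagonal entries $t=D_j$, where the sum in the perturbation term collapses to the single summand $cD_jh_j\prod_{l\ne j}(D_j-D_l)$. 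Taking the ratio with the $j=0$ equation eliminates $c$ and produces $h_k$ in closed form in one stroke, with no linear system to check and no appeal to uniqueness. Your route is cleaner and more self-contained; the paper's route has the minor advantage of making the role of the Riemann-scheme roots $1/A_\ell$ more visible, at the cost of a longer determinant manipulation. The bookkeeping you flag in the last step is indeed the only delicate point, and it goes through exactly as you describe.
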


\begin{proof}
We consider the eigen polynomial 
$$Q(t)=\det(t\cdot id_p-M_0M_1)$$ 
of the matrix $M_0M_1=M_\infty^{-1}$. 
By the Riemann scheme (\ref{eq:GR-scheme}),
$1/A_1,\dots,1/A_p$ are solutions to the equation $Q(t)=0$.
Thus we have
\begin{eqnarray*}
& &\det(M_0M_1-id_p/A_\ell)\\
&=&\begin{vmatrix}
d_0+\mu  & \mu & \mu &\cdots &\mu\\ 
\mu B_1^{-1}h_1&d_1+\mu B_1^{-1}h_1&\mu B_1^{-1}h_1&\cdots&\mu B_1^{-1}h_1\\ 
\mu B_2^{-1}h_2&\mu B_2^{-1}h_2&d_2+\mu B_2^{-1}h_2&\cdots&\mu B_2^{-1} h_2\\ 
\vdots &\vdots  &\vdots  & \ddots &\vdots \\
\mu B_{p-1}^{-1}h_{p-1}&\mu B_{p-1}^{-1}h_{p-1}&\mu B_{p-1}^{-1}h_{p-1}&\cdots 
&d_{p-1}+\mu B_{p-1}^{-1}h_{p-1}
\end{vmatrix}\\
&=&\begin{vmatrix}
d_0+\mu  & \mu & \mu &\cdots &\mu\\ 
-d_0B_1^{-1}h_1&d_1&0&\cdots&0\\ 
-d_0B_2^{-1}h_2&0&d_2&\cdots&0\\ 
\vdots &\vdots  &\vdots  & \ddots &\vdots \\
-d_0B_{p-1}^{-1}h_{p-1}&0&0&\cdots 
&d_{p-1}
\end{vmatrix}\\
&=&\dfrac{1}{1+h_1+\cdots+h_{p-1}}
\begin{vmatrix}
\nu & \l-1 & \l-1 &\cdots &\l-1\\ 
-d_0B_1^{-1}h_1&d_1&0&\cdots&0\\ 
-d_0B_2^{-1}h_2&0&d_2&\cdots&0\\ 
\vdots &\vdots  &\vdots  & \ddots &\vdots \\
-d_0B_{p-1}^{-1}h_{p-1}&0&0&\cdots 
&d_{p-1}
\end{vmatrix}=0,
\end{eqnarray*}
where $\mu=\dfrac{\l-1}{1+h_1+\cdots+h_{p-1}}$,
$\nu=d_0(h_1+\cdots+ h_{p-1})+\l-1/A_\ell$ and 
$$
d_0=\frac{A_\ell-1}{A_\ell},\ d_1=\frac{A_\ell-B_1}{A_\ell B_1},\ \dots,\  
d_{p-1}=\frac{A_\ell-B_{p-1}}{A_\ell B_{p-1}}.
$$
The last determinant is linear with respect to $h_1,\dots,h_{p-1}$ since
these variables appear only in the first column as linear terms.
By the cofactor expansion with respect to the first column, we can 
evaluate its coefficient of $h_k$ and its constant term.
By multiplying $A_\ell^{p-1}\Big(\prod\limits_{i=1}^pA_i\Big)
\Big(\prod\limits_{j=1}^{p-1}B_j\Big)$ 
to them, we have a linear equation 
\begin{eqnarray*}
& &-\sum_{k=1}^{p-1} B_k(A_\ell-1)
\Big(\prod_{j=1}^{j\ne \ell}(A_\ell-B_j)\Big)
\Big(\prod_{1\le i\le p}^{i\ne \ell}A_i
-\prod_{1\le j\le p-1}^{j\ne k} B_j\Big)
h_k\\
&=&\Big(\prod_{j=1}^{p-1}(A_\ell-B_j)\Big)
\Big(\prod_{1\le i\le p}^{i\ne \ell}A_i-\prod_{j=1}^{p-1} B_j\Big)
\end{eqnarray*}
from $Q(1/A_\ell)=0$. 
By letting $\ell$ vary from  $1$ to $p$,  
we have a system of linear equations with respect to 
$h_1,\dots,h_{p-1}$. 
We can check that 
$$
h_k=\frac{-\Big(\prod\limits_{1\le j\le p-1}^{j\ne k}
(B_j-1)\Big)\Big(\prod\limits_{i=1}^p(A_i-B_k)\Big)}
{B_k\Big(\prod\limits_{1\le j\le p-1}^{j\ne k}(B_j-B_k)\Big)
\Big(\prod\limits_{i=1}^{p}(A_i-1)\Big)} \quad (1\le k\le p-1)
$$
satisfy this system of linear equations. 
The uniqueness of $H$ completes this proposition.
%This is the unique solution by the following lemma.
\end{proof}

\begin{remark}
Note that 
$$\vv H\tr \vv=\Tr(H)=\frac{\Big(\prod\limits_{i=1}^pA_i-
\prod\limits_{j=1}^{p-1}B_j\Big)\prod\limits_{j=1}^{p-1}(B_j-1)}
{\prod\limits_{i=1}^p(A_i-1)\prod\limits_{j=1}^{p-1}B_j}.$$
We have 
$$\frac{1-\l}{\vv H\tr \vv}
=\frac{\prod\limits_{i=1}^p(A_i-1)\prod\limits_{j=1}^{p-1}B_j}
{\prod\limits_{i=1}^pA_i\prod\limits_{j=1}^{p-1}(B_j-1)},
$$
in which the factor $\prod\limits_{i=1}^pA_i-\prod\limits_{j=1}^{p-1}B_j$ 
in $1-\l$ and $\vv H\tr \vv$ is canceled.
\end{remark}

We conclude this subsection by the following.
\begin{theorem}
Suppose the non-integral condition (\ref{eq:non-int}) for 
$a_1,\dots,a_p$, $b_1,\dots,b_{p-1}$. 
Then there exists a fundamental system $\bF_p(x)$ of 
solutions to the hypergeometric differential equation (\ref{eq:gen-HGDE}) 
around $\dot x=\e$ such that 
the circuit matrices $M_0$ and $M_1$ along the loops $\rho_0$ and 
$\rho_1$ in (\ref{eq:loops}) are expressed as
$$M_0=\begin{pmatrix}
1 &   &  &  \\
  & B_1^{-1}& & \\
  &   &\ddots & \\
  &   & & B_{p-1}^{-1}\\
\end{pmatrix},\quad 
M_1=id_p-\frac{1-\l}{\vv  H \tr \vv }H\tr \vv  \vv ,
$$
where $\e$ is a sufficiently small positive real numbers, 
$A_i=e^{2\pi\sqrt{-1}a_i}$ $(1\le i\le p)$, 
$B_j=e^{2\pi\sqrt{-1}b_j}$ $(1\le j\le p-1)$,   
$\l=\Big(\prod\limits_{j=1}^{p-1}B_j\Big)
\Big/\Big(\prod\limits_{i=1}^p A_i\Big)$, 
$\vv =(1,\dots,1)$ and 
$$H=
\begin{pmatrix}
1 &   &  &  \\
  & h_1& & \\
  &   &\ddots & \\
  &   & & h_{p-1}\\
\end{pmatrix}, 
$$
$$
h_k=\frac{-\Big(\prod\limits_{1\le j\le p-1}^{j\ne k}
(B_j-1)\Big)\Big(\prod\limits_{i=1}^p(A_i-B_k)\Big)}
{B_k\Big(\prod\limits_{1\le j\le p-1}^{j\ne k}(B_j-B_k)\Big)
\Big(\prod\limits_{i=1}^{p}(A_i-1)\Big)}\quad (1\le k\le p-1).
$$
\end{theorem}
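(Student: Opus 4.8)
The plan is to collect the four lemmas and the proposition of this subsection into a single statement, the only genuinely new ingredient being a choice of normalization for the fundamental system (\ref{eq:F-S-GHGDE}). I would start from an arbitrary diagonal normalization $\bF_p^g$, record what each earlier result contributes, then fix $g$ so that the distinguished eigenvector becomes $\vv$, and finally read off $H$.

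First I would observe that Lemma \ref{lem:GM0} already gives the asserted diagonal form of $M_0$ for every diagonal $g\in GL_p(\C)$, and that this form is unaffected by any further diagonal rescaling of the basis, since conjugating a diagonal matrix by a diagonal matrix changes nothing. Next, Lemma \ref{lem:G-invariant} provides a diagonal $H\in GL_p(\C)$ with $M_\rho^g H\tr(M_\rho^g)^\vee=H$ for all $\rho$. Reading the eigenvalues of $M_1^g$ off the Riemann scheme (\ref{eq:GR-scheme}) --- the value $1$ with multiplicity $p-1$ and the simple value $\l=(\prod_j B_j)/(\prod_i A_i)$ --- and feeding this into Lemmas \ref{lem:GH-orth} and \ref{lem:GM1}, I obtain the reflection expression
$$M_1^g=id_p-\frac{1-\l}{vH\tr v^\vee}H\tr v^\vee v,$$
together with the essential fact that no coordinate $v_i$ of the $\l$-eigenvector vanishes.

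The decisive step is the normalization. Because every $v_i\ne 0$, I replace the constants $g_i$ by $v_ig_i$, i.e.\ pass to the basis $\diag(v_1,\dots,v_p)\,\bF_p^g$; under this diagonal change of basis the $\l$-eigenvector becomes $\vv=(1,\dots,1)$, the matrix $M_0$ is untouched, and Lemma \ref{lem:G-invariant} applied to the new basis again yields a diagonal invariant $H$. This produces exactly the pair (\ref{eq:GM0GM1}). Since the reflection expression is invariant under scaling $H$ by a nonzero constant, I may normalize the top-left entry of $H$ to $1$, giving $H=\diag(1,h_1,\dots,h_{p-1})$ as in (\ref{eq:normal-H}); the uniqueness remark following (\ref{eq:normal-H}) pins this $H$ down. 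Finally I invoke Proposition \ref{prop:H-GM1} for the explicit $h_k$, which closes the argument.

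I expect the main obstacle to be twofold. Within the assembly itself, the delicate point is checking that the normalization $v\mapsto\vv$ really preserves every claimed feature at once --- that $M_0$ keeps its diagonal shape and that the invariant form $H$ stays diagonal after the rescaling --- which is why I would lean on Lemma \ref{lem:G-invariant} in its $g$-uniform formulation rather than transporting a single $H$ by hand. Upstream of the assembly, the true computational heart lies in Proposition \ref{prop:H-GM1}: deriving the $p$ linear equations from $Q(1/A_\ell)=0$ and verifying that the proposed product formula for $h_k$ satisfies all of them simultaneously, where it is precisely the uniqueness of the normalized $H$ that upgrades \emph{a} solution of the system to \emph{the} matrix $H$.
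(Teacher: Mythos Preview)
Your proposal is correct and matches the paper's approach exactly: the theorem is stated in the paper without a separate proof environment, precisely because it is the summary of Lemmas \ref{lem:GM0}--\ref{lem:GM1} together with Proposition \ref{prop:H-GM1} and the normalization discussed between them, which is just what you assemble. Your check that the diagonal rescaling $g_i\mapsto v_ig_i$ sends the $\l$-eigenvector to $\vv$ while leaving $M_0$ diagonal and $H$ diagonal is the only bookkeeping needed, and you handle it correctly.
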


\section{Monodromy representation of $F_C$}
\subsection{Lauricella's $F_C$ system 
%of hypergeometric differential equations
}
In this subsection, we refer to \cite{AK},\cite{HT} and \cite{La}.
Lauricella's hypergeometric series $F_C$ is defined by 
\begin{eqnarray*}
& &\HGF{}{C}{a_1,a_2}{b_1,\dots,b_m}{x_1,\dots,x_m}\\
&=&\sum_{n_1,\dots,n_m\in \N^m} \frac{(a_1,n_1+\cdots+n_m)(a_2,n_1+\cdots+n_m)}
{(b_1,n_1)\cdots(b_m,n_m)(1,n_1)\cdots(1,n_m)}x_1^{n_1}\cdots x_m^{n_m},
\end{eqnarray*}
where the vector $x=(x_1,\dots,x_m)$ consisting of the main variables is in 
$$\{x\in \C^m\mid \sqrt{|x_1|}+\cdots+\sqrt{|x_m|}<1\},$$
and $a_1,a_2$,$b_1,\dots,b_m$ are complex parameters with 
$b_1,\dots,b_m\notin -\N$.
This series satisfies differential equations 
\begin{eqnarray*}
& &\Big[x_i(1\!-\! x_i)\pa_i^2
\!-\! x_i\sum_{1\le j\le m}^{j\ne i}x_j \pa_i\pa_j
\!-\! \sum_{1\le j_1,j_2\le m}^{j_1\ne i}x_{j_1}x_{j_2}\pa_{j_1}\pa_{j_2}
\\
& &\hspace{3mm}+\! \{b_i\!-\! (a_1\!+\! a_2\!+\! 1)x_i\}\pa_i
\!-\! (a_1\!+\! a_2\!+\! 1)\sum_{1\le j\le m}^{j\ne i}x_j\pa_j\!-\! a_1a_2
\Big]f(x)=0,
\end{eqnarray*}
($i=1,\dots,m$), which generate 
Lauricella's $F_C$ system of hypergeometric differential equations.  
Here $\pa_i$ is the partial differential operator
with respect to $x_i$. 
Lauricella's $F_C$ system is integrable of rank $2^m$ 
and regular singular with singular locus 
$$S_m=\{x\in \C^m\mid x_1\cdots x_mR(X)=0\},$$
where $R_m(x)$ is a polynomial of degree $2^{m-1}$ given by 
$$\prod_{\sigma_1,\dots,\sigma_m=\pm1}
(1+\sigma_1\sqrt{x_1}+\cdots+\sigma_m \sqrt{x_m}).$$

\begin{fact}[\cite{La}]
\label{fact:series}
If $b_1,\dots,b_{m}\notin \Z$ then 
a fundamental system of solutions to Lauricella's $F_C$ system 
around $\dot x=(\e_1,\dots,\e_m)$ is given as follows:
$$
\begin{array}{|c|c|}
\hline
1  &\HGF{}{C}{a_1,a_2}{b_1,\dots,b_m}{x}\\
\hline
& \vdots\\
m  &x_{j}^{1-b_j}
\HGF{}{C}{a_1-b_j+1,a_2-b_j+1}{b_1,\dots,2-b_j,\dots,b_m}{x}
\\
& \vdots\\
\hline
\vdots & \vdots\\
\hline
& \vdots\\
{m}\choose{r} &\Big[\prod\limits_{j\in J_r} x_{j}^{1-b_j}\Big]
\HGF{}{C}{a_1+\sum\limits_{j\in J_r}(1-b_j),a_2+\sum\limits_{j\in J_r}
(1-b_j)}
{b_1+2\de_{1,J_r}(1\!-\!b_1),\dots,b_m+2\de_{m,J_r}(1\!-\!b_m)}{x}\\
& \vdots\\
\hline
\vdots &\vdots\\
\hline
1\  &\Big[\prod\limits_{j=1}^m x_{j}^{1-b_{j}}\Big]
\HGF{}{C}{a_1+\sum\limits_{j=1}^m(1-b_{j}),a_2+\sum\limits_{j=1}^m(1-b_{j})}
{2-b_1,\dots,2-b_m}x\\
%1\  &\Big[\prod\limits_{i=1}^m x_{i}^{\l_{i}}\Big]
%F_C(a+\sum\limits_{i=1}^r\l_{i},b+\sum\limits_{i=1}^m\l_{i},
%(c)+2\sum\limits_{i=1}^m\l_{i}e_{i};x)\\
\hline
\end{array}
$$
where $\e_1,\dots,\e_m$ is a sufficiently small positive real numbers
satisfying $$\e_1\gg\cdots\gg\e_m,$$ and 
$J_r$ is a subset of $\{1,\dots,m\}$ of 
cardinality $r$, and 
\begin{equation}
\label{eq:ext-delta}
\de_{i,J_r}=\left\{
\begin{array}{rcc}
 1 & \textrm{if} & i\in J_r,\\
 0 & \textrm{if} & i\notin J_r.\\
\end{array}
\right.
\end{equation}
\end{fact}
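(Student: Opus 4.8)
The plan is to exhibit $2^m$ explicit solutions with distinct local exponents at the normal crossing point $x=0$ and then invoke the rank count. Since $R_m(0)=\prod_{\sigma}(1+0+\cdots+0)=1\ne 0$, near the origin the singular locus $S_m$ is just the coordinate divisor $x_1\cdots x_m=0$, a normal crossing divisor, so a local solution is expected to be $\prod_{j\in J}x_j^{1-b_j}$ times a holomorphic factor, one for each subset $J\subseteq\{1,\dots,m\}$. In each direction the two candidate exponents $0$ and $1-b_j$ differ by a non-integer because $b_j\notin\Z$, so no logarithms intervene. I would produce these $2^m$ solutions by a single parameter-shift computation and then check that they are linearly independent.

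First I would recast the given operators in Euler-operator form. Writing $\theta_i=x_i\pa_i$ and $\theta=\theta_1+\cdots+\theta_m$, the identities $x_i^2\pa_i^2=\theta_i(\theta_i-1)$ and $x_ix_j\pa_i\pa_j=\theta_i\theta_j$ ($i\ne j$) show that the $i$-th operator in the statement equals $x_i^{-1}L_i$ with
\begin{equation*}
L_i=\theta_i(\theta_i+b_i-1)-x_i(\theta+a_1)(\theta+a_2),\qquad i=1,\dots,m.
\end{equation*}
Away from $x_i=0$ the systems $\{L_if=0\}$ and $\{x_i^{-1}L_if=0\}$ have the same solutions, so it suffices to work with $L_1,\dots,L_m$. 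The key algebraic fact is the conjugation rule $\theta_k\circ x_k^{s}=x_k^{s}\circ(\theta_k+s)$, which realizes $x_k^{-s}L_ix_k^{s}$ as the substitution $\theta_k\mapsto\theta_k+s$ (and hence $\theta\mapsto\theta+s$, since $\theta$ contains $\theta_k$). Taking $s=1-b_k$, a direct check shows that $x_k^{-(1-b_k)}L_ix_k^{1-b_k}$ is exactly the operator $L_i$ for the $F_C$ system whose parameters arise from $(a_1,a_2;b_1,\dots,b_m)$ by $a_1\mapsto a_1+(1-b_k)$, $a_2\mapsto a_2+(1-b_k)$, $b_k\mapsto 2-b_k$, and $b_i$ ($i\ne k$) unchanged: for $i=k$ one gets $(\theta_k+1-b_k)\theta_k=\theta_k(\theta_k+(2-b_k)-1)$, and in every operator the factor $(\theta+a_\iota)$ becomes $(\theta+(1-b_k)+a_\iota)$.

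Since the conjugations in distinct directions commute, conjugating by $\prod_{j\in J}x_j^{1-b_j}$ turns the $F_C$ system with parameters $(a_1,a_2;b_1,\dots,b_m)$ into the $F_C$ system with the parameters displayed in the $J$-row of the table. The top entry ($J=\emptyset$) is the series $F_C$ itself, which satisfies its own system by the stated defining property; pushing this holomorphic solution of the shifted system back through the conjugation shows that each table entry $\big[\prod_{j\in J}x_j^{1-b_j}\big]\,F_C(\text{shifted};x)$ solves the original system. Convergence on the stated polydomain persists because the radius is governed by $R_m$, and the shifted lower parameters $2-b_j$ avoid $-\N$.

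It remains to verify linear independence and conclude. Near $x=0$ the $J$-th solution has leading term $\prod_{j\in J}x_j^{1-b_j}$, and the $2^m$ exponent vectors $(s_1,\dots,s_m)$ with $s_j\in\{0,1-b_j\}$ are pairwise distinct modulo $\Z^m$ precisely because $b_j\notin\Z$; equivalently, the local monodromy around $x_j=0$ multiplies the $J$-th solution by $e^{2\pi\sqrt{-1}(1-b_j)}$ when $j\in J$ and by $1$ otherwise, so the $2^m$ solutions are simultaneous eigenvectors with distinct eigenvalue patterns and hence linearly independent (the base point $\dot x$ with $\e_1\gg\cdots\gg\e_m$ pins down the branches of the prefactors). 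Since the $F_C$ system is integrable of rank $2^m$, these $2^m$ independent solutions form a fundamental system. The genuine content of the argument is the conjugation step together with the reduction to Euler form; the remaining points, convergence and linear independence, are then routine, the latter resting only on the non-integrality of the $b_j$.
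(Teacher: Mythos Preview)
Your proof is correct. The paper itself does not prove this statement: it is recorded as a \emph{Fact} with a citation to Lauricella's 1893 paper and is used as input rather than derived. So there is no ``paper's own proof'' to compare against; you have supplied a clean self-contained argument for a result the author takes as classical.

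Your approach---rewriting each operator as $x_i^{-1}\big(\theta_i(\theta_i+b_i-1)-x_i(\theta+a_1)(\theta+a_2)\big)$ and then observing that conjugation by $\prod_{j\in J}x_j^{1-b_j}$ implements the parameter shift $a_\iota\mapsto a_\iota+\sum_{j\in J}(1-b_j)$, $b_j\mapsto 2-b_j$ for $j\in J$---is the standard and efficient route, and the computations check out. The linear-independence step via simultaneous eigenvalues of the commuting local monodromies $M_i$ is exactly what the paper later records in Lemma~\ref{lem:M1,...,M_m}, so your argument meshes well with the surrounding text. The only cosmetic remark is that the hypothesis $\e_1\gg\cdots\gg\e_m$ is not actually needed for the independence argument (any base point in the common domain of convergence would do); the paper imposes that ordering for the later description of the loop $\rho_{m+1}$, not for this fact.
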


We denote the solution with the factor $\prod_{j\in J_r} x_j^{1-b_j}$
in Fact \ref{fact:series} by $F_C^{J_r}(x)$. 
%We set $J_m=\{1,\dots,m\}$. 
For the empty set $J_0=\phi$, we omit $J_0$ from this expression, i.e., 
$$F_C^{J_0}(x)=
F_C^{\phi}(x)=
\HGF{}{C}{a_1,a_2}{b_1,\dots,b_m}{x}.
$$

\subsection{Circuit matrices of Lauricella's $F_C$} 
In this subsection, we assume that 
\begin{equation}
\label{eq:non-int-FC}
b_1,\dots,b_m, a_1-\sum_{j\in J}b_j,\ a_2-\sum_{j\in J}b_j
,\ 2(a_1+a_2-\sum_{j=1}^m b_j)
\notin \Z,
\end{equation}
where $J$ runs over the subsets of $\{1,\dots,m\}$.
We set
$$A_i=\exp(2\pi\sqrt{-1}a_i)\ (i=1,2),\quad 
B_j=\exp(2\pi\sqrt{-1}b_j)\ (1\le j\le m).$$
We choose a fundamental system $\bF_C^g(x)$ of solutions to Lauricella's 
system of $F_C$ around $\dot x=(\e_1,\dots,\e_m)$ as 
$$\bF_C^g(x)=g \begin{pmatrix}
F_C(x)\\
\vdots \\
F_C^{J}(x)\\
\vdots \\
F_C^{J_m}(x)\\
\end{pmatrix},\quad 
g=\diag(g_{\phi},\dots,g_{J}
,\dots,g_{J_m})\in GL_{2^m}(\C),
$$
where $\diag(z_1,\dots,z_m)$ denotes the diagonal matrix 
with diagonal entries $z_1,\dots,z_m$, 
$J\subset\{1,\dots,m\}$ are arranged lexicographically, i.e,
$$J_0=\phi,\{1\},\{2\},\{1,2\},\{3\},\dots,\{1,2,3\},\{4\},\dots, 
\{1,\dots,m\}=J_m.
$$
Note that the order of $J$ from the smallest is
$$2^J=1+\sum_{i=1}^m\de_{i,J}2^{i-1}=
1+\de_{1,J}2^0+\de_{2,J}2^1+\de_{3,J}2^2 +\cdots +\de_{m,J}2^{m-1},$$
where $\de_{i,J}$ is given in (\ref{eq:ext-delta}).

Let $X$ be the complement of the singular locus $S_m$ in $\C^m$. 
%We take  a base point $\dot x=(\e_1,\dots,\e_m)$ in $X$. 
%where $\e$ is a sufficiently small positive real number.  
Let $\rho$ be a loop in $X$ with base point $\dot x=(\e_1,\dots,\e_m)$.
Then there exists $M_\rho\in GL_{2^m}(\C)$  such that 
the analytic continuation of $\bF_C^g(x)$ along $\rho$ is expressed as 
$M^g_\rho \bF_C^g(x)$.  
We call $M^g_\rho$ the circuit matrix of Lauricella's system $F_C$ 
with respect to the fundamental system $\bF_C^g(x)$. 

We give a system of generators of the fundamental group $\pi_1(X,\dot x)$.

\begin{fact}[\cite{G}]
\label{fact:pi1}
Let $\rho_i$ ($1\le i\le m$) be a loop defined by 
$$\rho_i:[0,1]\ni t\mapsto 
\bordermatrix{ &   &\footnotesize{i\textrm{-th}}&  \cr
 & \e_1,\dots,\e_{i-1},& \e_i e^{2\pi\sqrt{-1}t},&\e_{i+1},\dots,\e_m\cr}
\in X,$$
and let $\rho_{m+1}$ be a loop in the intersection of 
$X$ and the line 
$$L=\{\dot x \cdot t\in \C^m\mid t\in \C\}$$  
starting from $\dot x$,  
turning around the nearest point of the intersection $S_m\cap L$ to $\dot x$ 
once positively, and tracing back to $\dot x$. 
Then these loops generate 
the fundamental group $\pi_1(X,\dot x)$, and 
%$\rho_1,\dots,\rho_m,\rho_{m+1}$, which they 
satisfy  the relations 
$$\rho_j\rho_i=\rho_i\rho_j, \quad 
(\rho_i\rho_{m+1})^2=(\rho_{m+1}\rho_{i})^2,\quad 
(1\le i<j\le m).
$$
\end{fact}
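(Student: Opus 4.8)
The plan is to linearize the branch divisor by extracting a square root in each variable. Introduce coordinates $y=(y_1,\dots,y_m)$ with $x_i=y_i^2$, so that $\phi\colon\C^m\to\C^m$, $\phi(y)=(y_1^2,\dots,y_m^2)$, is a Galois branched covering with deck group $G=(\Z/2)^m$ acting by the sign changes $g_i\colon y_i\mapsto -y_i$. Pairing the two factors of $R_m$ that differ only in the sign $\sigma_i$ gives $\big(1+\sum_{j\ne i}\sigma_j\sqrt{x_j}\big)^2-x_i$, from which one sees that $R_m(x)$ pulls back to the product of the $2^m$ affine hyperplanes $H_\sigma\colon 1+\sum_i\sigma_iy_i=0$, while $x_1\cdots x_m$ pulls back to $(y_1\cdots y_m)^2$. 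Hence $\tilde X=\phi^{-1}(X)$ is the complement of the hyperplane arrangement $\{y_i=0\}_{i=1}^m\cup\{H_\sigma\}_\sigma$, and $\phi\colon\tilde X\to X$ is an unramified $G$-covering, giving a short exact sequence $1\to\pi_1(\tilde X,\tilde x)\to\pi_1(X,\dot x)\to G\to 1$ in which $\rho_i$ maps to $g_i$ (one turn of $x_i$ about $0$ is a half-turn of $y_i$), while $\rho_{m+1}$ lifts to a genuine loop in $\tilde X$.

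For generation I would argue upstairs and push down. The fundamental group of a hyperplane-arrangement complement is generated by meridians of its hyperplanes, and here these are the meridians of the $\{y_i=0\}$ and of the $H_\sigma$. The lift of $\rho_i^2$ is precisely a meridian of $\{y_i=0\}$, and the lift of $\rho_{m+1}$ is a meridian of the hyperplane $H_{(+,\dots,+)}$ met by $L$ in the all-positive sheet $y_i=\sqrt{\e_i}$. Since $G$ permutes the $H_\sigma$ simply transitively (equivalently, $R_m$ is the norm of a single linear form in the $\sqrt{x_i}$, hence irreducible) and is the image of $\langle\rho_1,\dots,\rho_m\rangle$, conjugation by the $\rho_i$ carries the lift of $\rho_{m+1}$ to a meridian of every $H_\sigma$. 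Feeding this into the exact sequence shows that $\rho_1,\dots,\rho_m,\rho_{m+1}$ already generate $\pi_1(X,\dot x)$.

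The relations are then local computations. The commutation $\rho_i\rho_j=\rho_j\rho_i$ is immediate: near $\dot x$ the divisors $\{x_i=0\}$ and $\{x_j=0\}$ cross normally, so the two loops can be realized in a product of punctured disks, where they commute. For the braid relation the decisive fact is that $R_m=0$ is \emph{tangent} to each coordinate hyperplane: restricting the paired factors gives
$$R_m(x)\big|_{x_i=0}=\Big(\prod_{\sigma'\in\{\pm1\}^{m-1}}\big(1+\textstyle\sum_{j\ne i}\sigma'_j\sqrt{x_j}\big)\Big)^2=R_{m-1}(x')^2,$$
a perfect square, so along a generic point of $\{x_i=0\}\cap\{R_{m-1}=0\}$ the two branches become, in suitable local coordinates $(u,t)$, the line $u=0$ and the tangent parabola $u=t^2$. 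The local fundamental group of the complement of a smooth conic and its tangent line is the classical group $\langle a,b\mid (ab)^2=(ba)^2\rangle$; taking $a=\rho_i$ and $b=\rho_{m+1}$ yields exactly $(\rho_i\rho_{m+1})^2=(\rho_{m+1}\rho_i)^2$.

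I expect the main obstacle to be not these local models but the reconciliation of the explicit, globally defined loops $\rho_1,\dots,\rho_{m+1}$ — pinned to the base point $\dot x$ with $\e_1\gg\cdots\gg\e_m$ and to the distinguished line $L$ — with the abstract meridians produced by the covering and by the local charts. Tracking the connecting paths and base points through this non-generic configuration, and checking that the nearest intersection of $L$ with $S_m$ is indeed the point governing $\rho_{m+1}$, is precisely the bookkeeping that the reduction lemma (Lemma~\ref{lem:reduct}) and the induction on $m$ are designed to carry out, with the base case $m=1$ reducing, via $R_1(x_1)=1-x_1$, to the three-punctured line of \S2.
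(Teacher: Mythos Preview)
The paper does not prove this statement at all: it is stated as a \emph{Fact} imported from \cite{G}, and everything that follows (in particular Lemma~\ref{lem:reduct} and the inductive determination of $H$) takes it as input. So there is no in-paper proof to compare against; what you have written is an independent attempt at what \cite{G} does.

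Your outline is a sensible strategy and the geometric observations are correct: the substitution $x_i=y_i^2$ does turn $R_m$ into a product of $2^m$ affine hyperplanes, the deck group $(\Z/2)^m$ does act transitively on them, and the restriction $R_m|_{x_i=0}=R_{m-1}(x')^2$ does exhibit the tangency responsible for the relation $(\rho_i\rho_{m+1})^2=(\rho_{m+1}\rho_i)^2$. The local model of a line with a simply tangent smooth branch giving $(ab)^2=(ba)^2$ is also the right picture.

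There is, however, a genuine gap in your closing paragraph. You say that the bookkeeping of matching the explicit loops $\rho_1,\dots,\rho_{m+1}$ (tied to the particular base point $\dot x$ with $\e_1\gg\cdots\gg\e_m$ and to the line $L$) to the abstract meridians is ``precisely the bookkeeping that the reduction lemma (Lemma~\ref{lem:reduct}) and the induction on $m$ are designed to carry out.'' This is circular: in the paper, the very first line of the proof of Lemma~\ref{lem:reduct} invokes Fact~\ref{fact:pi1} to obtain the commutation of $\rho_{m+1}\rho_m\rho_{m+1}\rho_m^{-1}$ with $\rho_m$, and the induction on $m$ in \S4 is an induction for determining the matrix $H$, not for establishing the generators and relations of $\pi_1(X,\dot x)$. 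Neither can be used to finish a proof of Fact~\ref{fact:pi1}. The identification of your abstract meridians with the concrete $\rho_i$ and $\rho_{m+1}$---including the verification that the nearest point of $S_m\cap L$ to $\dot x$ lies on the branch corresponding to the all-plus hyperplane $H_{(+,\dots,+)}$ in your cover---must be done directly, and your sketch stops exactly at that point.
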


\begin{lemma}
\label{lem:reduct}
We have 
\begin{eqnarray*}
(\rho_{m+1}\cdot\rho_{m}\cdot \rho_{m+1}\cdot\rho_{m}^{-1})\cdot \rho_m&=&
\rho_m\cdot (\rho_{m+1}\cdot\rho_{m}\cdot \rho_{m+1}\cdot\rho_{m}^{-1}),\\
\rho_{m+1}\cdot\rho_{m}\cdot \rho_{m+1}\cdot\rho_{m}^{-1}
&\overset{\widehat X}{\sim}& \rho_m',
\end{eqnarray*}
where $\rho_m'$ is the generator of $\pi_1(X',\dot x')$ 
for $X'=\C^{m-1}-S_{m-1}$ 
naturally embedded in 
the space $\widetilde X=\{x\in \C^m\mid x_1\cdots x_{m-1}R_m(x)\ne0\}$ 
with base point 
$\dot x'=(\e_1,\dots,\e_{m-1})\in X'$, 
and $\overset{\widehat X}{\sim}$ denotes the homotopy equivalence
in $\widehat X$. 
\end{lemma}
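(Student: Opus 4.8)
The statement has two independent parts, and I would prove them by quite different means. Writing $\eta:=\rho_{m+1}\rho_m\rho_{m+1}\rho_m^{-1}$, the first assertion $\eta\rho_m=\rho_m\eta$ is an identity in $\pi_1(X,\dot x)$ and should follow formally from the braid-type relation of Fact \ref{fact:pi1}. The second assertion $\eta\overset{\widetilde X}{\sim}\rho_m'$ is genuinely geometric: it records the degeneration of the divisor $\{R_m=0\}$ as the variable $x_m$ tends to $0$, and this is where the real work lies.

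For the first part, observe that $\eta\rho_m=\rho_{m+1}\rho_m\rho_{m+1}$ while $\rho_m\eta=\rho_m\rho_{m+1}\rho_m\rho_{m+1}\rho_m^{-1}$. Multiplying the desired equality on the right by $\rho_m$ turns it into $\rho_{m+1}\rho_m\rho_{m+1}\rho_m=\rho_m\rho_{m+1}\rho_m\rho_{m+1}$, that is, $(\rho_{m+1}\rho_m)^2=(\rho_m\rho_{m+1})^2$. This is exactly the relation $(\rho_i\rho_{m+1})^2=(\rho_{m+1}\rho_i)^2$ of Fact \ref{fact:pi1} specialised to $i=m$, so the first equality holds.

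For the second part I would first enlarge $X$ to $\widetilde X=\{x\in\C^m\mid x_1\cdots x_{m-1}R_m(x)\ne0\}$, in which the wall $x_m=0$ is restored, and show that $\rho_m$ becomes contractible there. Indeed the disk $\{(\e_1,\dots,\e_{m-1},s)\mid|s|\le\e_m\}$ bounds $\rho_m$, and it avoids $\{R_m=0\}$: the restriction $R_m(\dot x',s)$ is a polynomial in $s$ with roots $s=(1+\sum_{i<m}\sigma_i\sqrt{\e_i})^2$, all of modulus at least $(1-\sqrt{\e_1}-\cdots-\sqrt{\e_{m-1}})^2$, which exceeds $\e_m$ once the $\e_i$ are small. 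Hence $\eta\overset{\widetilde X}{\sim}\rho_{m+1}^2$. To interpret this, recall that $\rho_{m+1}$ is a meridian of the divisor $V:=\{R_m=0\}$ (it encircles the nearest point of $V\cap L$ to $\dot x$), while $\rho_m$ replaces $\sqrt{x_m}$ by $-\sqrt{x_m}$ and therefore carries the branch $D$ of $V$ encircled by $\rho_{m+1}$ to the neighbouring branch $D'$ obtained by flipping the sign $\sigma_m$; thus $\eta$ winds once around each of the two branches $D,D'$.

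Finally I would exploit the degeneration $R_m(x',0)=R_{m-1}(x')^2$: on the slice $x_m=0$ the divisor $V$ restricts to the doubled divisor $\{R_{m-1}=0\}$, so this slice is tangent to $V$ along $\{R_{m-1}=0\}$, and the branches $D,D'$ coalesce there. In the transverse two-plane the local model is $V=\{\xi^2=x_m\}$, a smooth curve tangent to $\{x_m=0\}$ at the origin, whose complement has $\pi_1\cong\Z$ generated by a single meridian. The loop $\rho_m'$, lying in the slice and encircling $\{R_{m-1}=0\}$, bounds the tangent disk $\{x_m=0,\ |\xi|\le\delta\}$, which meets $V$ with intersection multiplicity $2$; hence $\rho_m'$ has linking number $2$ with $V$, exactly as does $\rho_{m+1}^2$. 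Concretely, the homotopy is realised by shrinking the scaling line $L$ to $L'=\{t\dot x'\}$ as $x_m\to0$: the two transverse intersections of $L$ with $V$ merge into the single tangential intersection of $L'$ with $V$, carrying the loop $\eta$ that encircles both to the loop $\rho_m'$ that encircles the tangency, after joining the base points $\dot x$ and $\dot x'$ by the segment $x_m\colon\e_m\to0$ in $\widetilde X$. The main obstacle is precisely this last step: one must turn the coalescence of $D$ and $D'$ into an honest homotopy inside $\widetilde X$ and account correctly for the factor $2$ coming from the tangency $R_m(x',0)=R_{m-1}(x')^2$; the transverse local model $\{\xi^2=x_m\}$, in which linking number with $V$ is a complete homotopy invariant, is what makes this bookkeeping rigorous.
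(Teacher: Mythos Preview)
Your proof is correct and reaches the conclusion by essentially the same geometric mechanism as the paper, though you supply more detail at the key step. For the first assertion, both you and the paper derive commutation of $\eta=\rho_{m+1}\rho_m\rho_{m+1}\rho_m^{-1}$ with $\rho_m$ directly from the relation $(\rho_m\rho_{m+1})^2=(\rho_{m+1}\rho_m)^2$ of Fact~\ref{fact:pi1}; your reduction is identical to the paper's.

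For the second assertion, the paper argues informally: it observes that dragging $\rho_{m+1}$ along $\rho_m$ produces a loop around the \emph{second} nearest point of $S_m\cap L$, so that $\rho_{m+1}\cdot(\rho_m\rho_{m+1}\rho_m^{-1})$ encircles the two nearest points of $S_m\cap L$; then it lets $x_m\to 0$, invokes the factorisation $R_m(x',0)=R_{m-1}(x')^2$ to make those two points coalesce, and identifies the resulting loop with $\rho_m'$. Your argument packages the same degeneration differently: you first note that $\rho_m$ bounds an explicit disk in $\widetilde X$ (so $\eta\overset{\widetilde X}{\sim}\rho_{m+1}^2$), then match $\rho_{m+1}^2$ with $\rho_m'$ via the local model $\xi^2=x_m$ and a linking-number count. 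The underlying picture---two sheets of $\{R_m=0\}$ related by $\sigma_m\mapsto -\sigma_m$ merging along $\{R_{m-1}=0\}$ at $x_m=0$---is the same in both. The paper's version is quicker but leaves the final homotopy implicit; your local-model bookkeeping makes explicit why the tangency $R_m|_{x_m=0}=R_{m-1}^2$ forces $\rho_m'$ to carry linking number~$2$, which is exactly what the paper's ``these points meet'' sentence is asserting.
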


\begin{proof}
It is a direct consequence from  Fact \ref{fact:pi1} that 
$\rho_{m+1}\cdot\rho_{m}\cdot \rho_{m+1}\cdot\rho_{m}^{-1}$ commutes 
with $\rho_m$.
Let the line $L$ move 
along $\rho_m$. By tracing the deformation of $\rho_{m+1}$, 
we have a loop starting from $\dot x$,  
turning around the second nearest point $S_m\cap L$ to $\dot x$ 
once positively, and tracing back to $\dot x$. 
Since the base point $\dot x$ moves along $\rho_m$, 
this deformation is homotopic to $\rho_m\cdot \rho_{m+1}\cdot\rho_m^{-1}$.
Thus the loop 
$$\rho_{m+1}\cdot (\rho_m\cdot \rho_{m+1}\cdot\rho_m^{-1})$$ 
turns around the first and second nearest points $S_m\cap L$ to $\dot x$ 
once positively. 
Consider the limit as $x_m\to 0$. These points meets and 
the polynomial $R_m(x_1,\dots,x_m)$ reduces to 
$R_{m-1}(x_1,\dots,x_{m-1})^2$. Moreover the duplicated point 
is the nearest point of the intersection $S_{m-1}\cap L'$ to $\dot x'$. 
Hence the loop $\rho_{m+1}\cdot \rho_m\cdot \rho_{m+1}\cdot\rho_m^{-1}$ 
is homotopic to $\rho_m'$.
\end{proof}

We set $$M_i^g=M_{\rho_i}^g\quad (1\le i\le m+1).$$ 

\begin{lemma}
\label{lem:M1,...,M_m}
The circuit matrix $M^g_i$ of Lauricella's system $F_C$ 
is a diagonal matrix whose  entry 
%$d_i(J_r)$ of $M_i^g$  
corresponding to a subset $J$ of $\{1,\dots,m\}$ is
$$
%d_i(J_r)=
B_i^{-\de_{i,J}}=
\left\{
\begin{array}{ccc}
\dfrac{1}{B_i} &\textrm{if} & i\in J,\\[5mm] 
  1 &\textrm{if} & i\notin J,
\end{array}
\right.
$$
where $B_i=\exp(2\pi\sqrt{-1}b_i)$.
They are independent of the diagonal matrix $g\in GL_{2^m}(\C)$.
\end{lemma}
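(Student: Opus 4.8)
The plan is to read $M_i^g$ off directly from the explicit shape of the fundamental system in Fact \ref{fact:series}, using that the loop $\rho_i$ lets only the single coordinate $x_i$ wind once positively around the origin while freezing every other coordinate at $\e_j$. First I would recall that each solution $F_C^J(x)$ is the product of the monomial prefactor $\prod_{j\in J}x_j^{1-b_j}$ with a Lauricella series in $x=(x_1,\dots,x_m)$, and that this series is a convergent power series with non-negative integer exponents in each variable, hence holomorphic and single-valued on the polydisc $\{\sqrt{|x_1|}+\cdots+\sqrt{|x_m|}<1\}$ that contains the image of $\rho_i$. Since along $\rho_i$ the argument of $x_i$ increases by $2\pi$ while the arguments of the other variables are unchanged, the series factor returns to its original value and contributes nothing to the monodromy.

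The only source of monodromy is therefore the prefactor. When $i\notin J$, the variable $x_i$ does not occur in $\prod_{j\in J}x_j^{1-b_j}$, so $F_C^J(x)$ is left unchanged by $\rho_i$. When $i\in J$, the factor $x_i^{1-b_i}=\exp\bigl((1-b_i)\log x_i\bigr)$ is present, and replacing $\log x_i$ by $\log x_i+2\pi\sqrt{-1}$ multiplies it by $\exp\bigl(2\pi\sqrt{-1}(1-b_i)\bigr)=B_i^{-1}$, all other factors being inert. Hence the analytic continuation of each entry of the column $(F_C^J)_J$ along $\rho_i$ is $B_i^{-\de_{i,J}}F_C^J$, which says precisely that the continuation matrix $D_i$ with respect to the unnormalized basis $(F_C^J)_J$ is the diagonal matrix with the asserted entries $B_i^{-\de_{i,J}}$. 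To pass to $\bF_C^g(x)=g\,(F_C^J)_J$, I note that the continuation of $\bF_C^g$ along $\rho_i$ equals $gD_i(F_C^J)_J=gD_ig^{-1}\bF_C^g$, so $M_i^g=gD_ig^{-1}$; since $g$ and $D_i$ are both diagonal they commute, giving $M_i^g=D_i$ independently of $g$.

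There is no genuinely hard step here, the content being the bookkeeping of the prefactor exponents; this is the $F_C$ analogue of Lemma \ref{lem:GM0}. The one point requiring care is the verification that $\rho_i$ really is a loop in $X=\C^m-S_m$. It misses $x_1\cdots x_m=0$ because $|x_j|=\e_j>0$ for every $j$, and it stays inside the above polydisc throughout, since $|x_i|=\e_i$ is constant along $\rho_i$ and $\sqrt{\e_1}+\cdots+\sqrt{\e_m}<1$ at the base point; as $|\sigma_1\sqrt{x_1}+\cdots+\sigma_m\sqrt{x_m}|\le \sqrt{|x_1|}+\cdots+\sqrt{|x_m|}<1$ for every choice of signs, none of the factors of $R_m(x)$ can vanish on $\rho_i$. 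Thus $\rho_i$ avoids $S_m$ entirely, and this is already guaranteed by the standing choice of $\dot x$.
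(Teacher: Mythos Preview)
Your proof is correct and follows exactly the same idea as the paper's own argument, which reads in its entirety: ``We have only to note that the solution $F_C^{J}$ has a factor $x_i^{1-b_i}$ if and only if $i\in J$.'' You have simply fleshed out the bookkeeping (single-valuedness of the series factor, the conjugation by $g$, and the check that $\rho_i$ stays in $X$), all of which the paper leaves implicit.
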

\begin{proof}
We have only to note that the solution 
$F_C^{J}$ has a factor $x_i^{1-b_i}$ if and only if $i\in J$.
\end{proof}

There are $2^{m-1}$ subsets $J$'s such that 
$i\in J$ for any $1\le i\le m$. The both eigen spaces of $M_i$ of 
eigenvalue $1/B_i$ and of eigenvalue $1$ are  $2^{m-1}$ dimensional.

We need the following two facts given in \cite{G}.
\begin{fact}
\label{fact:G-invariant-FC}
Let $M_\rho^g$ be the circuit matrix 
along $\rho \in \pi_1(X,\dot x)$ with respect to $\bF_C^g(x)$. Then 
there exists a diagonal matrix $H\in GL_{2^m}(\C)$ such that 
$$M^g_\rho H \tr (M^g_\rho)^\vee =H,$$
where $z(a_1,a_2,b_1,\dots,b_m)^\vee=
z(-a_1,-a_2,-b_1,\dots,-b_m)$ for any function $z$ of 
the parameters.
% and $Z^\vee=(z_{ij}^\vee)$ for a matrix $Z=(z_{ij})$.
\end{fact}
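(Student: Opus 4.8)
The plan is to produce $H$ as the intersection matrix of the twisted cycles behind an Euler-type integral representation of the $F_C$ solutions, in direct analogy with the $_2F_1$ case (Lemma \ref{lem:invariant}, following Lemma 5.2 of \cite{M}) and the $_pF_{p-1}$ case (Lemma \ref{lem:G-invariant}), and then to read off diagonality from the already-known circuit matrices $M^g_i$ of Lemma \ref{lem:M1,...,M_m}. Concretely, the first step is to fix an Euler-type representation
$$F_C^J(x)=c_J\int_{\Delta_J}\Phi(t,x)\,\w(t),$$
where $\Phi$ is a multivalued function on a suitable punctured space, with exponents assembled from $a_1,a_2,b_1,\dots,b_m$, $\w$ is a rational top-form, and the twisted cycles $\Delta_J$ indexed by $J\subset\{1,\dots,m\}$ form a basis of the pertinent twisted homology group $\cH$. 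Negating every parameter yields the dual local system, whose twisted homology $\cH^\vee$ carries precisely the operation $z\mapsto z^\vee$; this is the homological content of the $\vee$ superscript in the statement.

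The second step is to invoke the topological intersection pairing $\cI\colon\cH\times\cH^\vee\to\C$ and to set $H=\big(\cI(\Delta_J,\Delta_K^\vee)\big)_{J,K}$. Under the non-integral condition (\ref{eq:non-int-FC}) this pairing is non-degenerate, so $H\in GL_{2^m}(\C)$. Because $\cI$ is purely topological it is invariant under the simultaneous monodromy action of any loop $\rho$ on its two arguments: the analytic continuation of $\bF_C^g$ along $\rho$ is encoded by the monodromy $\Delta_J\mapsto\sum_P(M^g_\rho)_{JP}\Delta_P$ on $\cH$, and the corresponding action on $\cH^\vee$ is recorded by $(M^g_\rho)^\vee$. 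Expanding $\cI(\rho_*\Delta_J,\rho_*^\vee\Delta_K^\vee)=\cI(\Delta_J,\Delta_K^\vee)$ bilinearly and reading the result as a matrix identity gives exactly $M^g_\rho H\,\tr(M^g_\rho)^\vee=H$.

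Diagonality is then essentially free. Taking $\rho=\rho_i$ for $1\le i\le m$, the matrix $M^g_i$ of Lemma \ref{lem:M1,...,M_m} is diagonal with $(J,J)$-entry $B_i^{-\de_{i,J}}$, so $(M^g_i)^\vee$ has $(K,K)$-entry $B_i^{\de_{i,K}}$, and the $(J,K)$-entry of the invariance reduces to $B_i^{\,\de_{i,K}-\de_{i,J}}h_{J,K}=h_{J,K}$. Since $b_i\notin\Z$ forces $B_i\ne1$, any nonzero $h_{J,K}$ would require $\de_{i,J}=\de_{i,K}$ for every $i$, that is $J=K$; hence $H$ is automatically diagonal. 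The real obstacle is the first step: constructing the integral representation of $F_C$ together with a basis of twisted cycles matching the series solutions $F_C^J$, and proving non-degeneracy of the intersection matrix $H$. That intersection-theoretic input is the substantive content imported from \cite{G}; granted it, the monodromy invariance and the passage to diagonal form are both formal.
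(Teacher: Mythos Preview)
Your proposal is correct and matches the approach the paper relies on. Note that the paper does not itself prove this statement: it is recorded as a \emph{Fact} imported from \cite{G}, just as the analogous Lemmas \ref{lem:invariant} and \ref{lem:G-invariant} are deferred to \cite{M}. Your sketch accurately reconstructs the intersection-theoretic mechanism behind that citation---the twisted intersection pairing $\cI$ on $\cH\times\cH^\vee$ is monodromy-invariant, and its Gram matrix in the cycle basis $\{\Delta_J\}$ is the desired $H$---and you are right that the substantive work (integral representation, cycle basis, non-degeneracy) lives in \cite{G}. Your diagonality argument is exactly the multivariable extension of the proof of Lemma \ref{lem:diag}: the invariance $M^g_iH\,\tr(M^g_i)^\vee=H$ with $M^g_i$ as in Lemma \ref{lem:M1,...,M_m} forces $\de_{i,J}=\de_{i,K}$ for every $i$ whenever $h_{J,K}\ne0$, hence $J=K$.
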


Note that the matrix $H$ depends on the diagonal matrix $g\in GL_{2^m}(\C)$.
We treat the entries of $H$ as indeterminants.

\begin{fact}
\label{fact:M0}
The eigenvalues of the circuit matrix $M^g_{m+1}$ consists of 
$1$ and $\l$. The eigenspace of eigenvalue $\l$ is spanned by 
a row vector $v$. The eigenspace of eigenvalue $1$ is 
$2^m-1$ dimensional.
\end{fact}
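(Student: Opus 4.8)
The plan is to reduce the statement to a local computation of the monodromy of Lauricella's $F_C$ connection around a smooth point of the divisor $\{R_m(x)=0\}$. First I would note that, for generic $\dot x$, the loop $\rho_{m+1}$ lies in the line $L=\{t\dot x\}$ and winds once around the nearest point $p_0\in S_m\cap L$ lying on $\{R_m=0\}$; since along $L$ the polynomial $R_m$ restricts to $\prod_\sigma(1-c_\sigma^2 t)$ with $c_\sigma=\sum_j\sigma_j\sqrt{\e_j}$, the point $p_0$ is a simple root, hence a smooth point of the hypersurface $\{R_m=0\}$ at which $L$ is transverse. The circuit matrix $M^g_{m+1}$ is therefore conjugate to the local monodromy of the $F_C$ connection along this smooth divisor component, which equals $\exp(2\pi\sqrt{-1}\,\mathrm{Res})$, where $\mathrm{Res}$ is the residue of the connection along $\{R_m=0\}$. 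Equivalently, one may restrict the $F_C$ system to $L$, obtaining for generic $\dot x$ a Fuchsian ordinary differential equation of rank $2^m$ whose local exponents at $p_0$ are the eigenvalues of $\mathrm{Res}$.

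The heart of the argument is the claim that this residue has rank one, i.e. its eigenvalue $0$ has multiplicity $2^m-1$ and there is a single nonzero eigenvalue $e$. Granting this, $M^g_{m+1}$ is a pseudo-reflection: the eigenspace of eigenvalue $1=\exp(0)$ is $2^m-1$ dimensional, and the eigenspace of eigenvalue $\l=\exp(2\pi\sqrt{-1}\,e)$ is spanned by a single row vector $v$, which is exactly the assertion of the Fact. I would establish the rank-one property through the Euler-type integral representation of the solutions, in which the $2^m$-dimensional solution space is realized as a twisted homology group and $\{R_m=0\}$ is precisely the locus where a single twisted cycle vanishes; the Picard--Lefschetz transformation attached to this vanishing cycle is manifestly a pseudo-reflection, fixing a hyperplane and scaling the vanishing cycle. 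This is the route taken in \cite{G} via the intersection form on twisted homology, and it simultaneously produces both the multiplicity splitting and the value of $\l$.

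It remains to identify $e$ and to confirm $\l\ne1$. The residue $e$ is read off from the local exponent of the $F_C$ system along $\{R_m=0\}$; in the one-variable case $m=1$ one has $R_1=1-x_1$, the loop $\rho_{m+1}$ reduces to the loop around $x_1=1$ of \S2, and $\l$ specializes to the value $B_1/(A_1A_2)$ determined there, which fixes the normalization of $e$ in general. The non-integral condition (\ref{eq:non-int-FC}) then guarantees $\l\ne1$, so that the two eigenvalues are genuinely distinct, and it also supplies the non-resonance needed to exclude logarithmic local solutions, ensuring that the eigenvalue $1$ is semisimple with the full $2^m-1$ dimensional eigenspace rather than carrying Jordan blocks. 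The main obstacle is precisely the rank-one claim for the residue: proving that the $F_C$ system degenerates in rank exactly one along its boundary divisor $\{R_m=0\}$ and extracting the single nonzero exponent. This is the technical content imported from \cite{G}, and in the present framework it plays the same role that the Riemann scheme played in pinning down the eigenvalues of $M_1$ in \S2 and \S3.
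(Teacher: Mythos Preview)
The paper does not prove this statement at all: it is presented as a \emph{Fact} imported from \cite{G}, with no argument supplied. Your proposal correctly identifies the mechanism behind the result in \cite{G} --- the local monodromy around the smooth component $\{R_m=0\}$ is a pseudo-reflection because it arises as a Picard--Lefschetz transformation attached to a single vanishing twisted cycle in the Euler-integral realization --- and you explicitly acknowledge that this is the route taken there. In that sense your sketch is consistent with, and more detailed than, the paper's own treatment, which simply cites \cite{G}.

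One expository difference worth noting: the paper deliberately postpones the identification of the numerical value of $\lambda$ (see the Remark immediately following the Fact), treating $\lambda$ as an indeterminate $\ne 1$ at this stage and deriving its value only later from the relation $(\rho_{m+1}\rho_m)^2=(\rho_m\rho_{m+1})^2$. Your sketch instead folds the evaluation of $\lambda$ into the same local-exponent argument. Both are legitimate; the paper's choice lets the value of $\lambda$ emerge from its own inductive machinery rather than from the cited source.
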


\begin{remark}
It is shown in \cite{G} that 
the eigenvalue $\l$ of the circuit matrix $M_{m+1}$ is 
$$(-1)^{m+1}\Big(\prod\limits_{j=1}^m B_j\Big)\Big/(A_1A_2),$$
which is different from $1$ under our assumption, where 
$A_i=\exp(2\pi\sqrt{-1}a_i)$ $(i=1,2)$.
In this subsection, we treat $\lambda$ as an indeterminant different from $1$,
and we show that $\lambda$ should take the above value.
\end{remark}

\begin{lemma}
\label{lem:orth-FC}
Let $v=(\dots,v_{J_r},\dots)$ be an eigenvector of $M_{m+1}^g$ of 
eigenvalue $\l$.  Then the eigenspace of $M_{m+1}^g$ of eigenvalue 
$1$ is characterized as 
$$\{w\in \C^{2^m}\mid w H\tr v^\vee =0\}.$$
Moreover, the vector $v$ satisfies 
$$vH \tr v^\vee\ne 0.$$
\end{lemma}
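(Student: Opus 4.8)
The plan is to imitate the structure of the proof of Lemma \ref{lem:H-orth} (and its $p$-variable analogue Lemma \ref{lem:GH-orth}), using the invariance relation from Fact \ref{fact:G-invariant-FC} as the sole algebraic input. The key observation is that the bilinear form $(u,u')\mapsto uH\tr (u')^\vee$ is preserved by every circuit matrix, and in particular by $M_{m+1}^g$, which lets me compare how eigenvectors of different eigenvalues pair against each other.

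First I would establish the orthogonality half. Let $w$ be any eigenvector of $M_{m+1}^g$ of eigenvalue $1$ and let $v$ be the eigenvector of eigenvalue $\l$. Applying Fact \ref{fact:G-invariant-FC} with $\rho=\rho_{m+1}$,
\begin{eqnarray*}
wH\tr v^\vee
&=&w\big(M_{m+1}^g H\tr (M_{m+1}^g)^\vee\big)\tr v^\vee\\
&=&(wM_{m+1}^g)\,H\,\tr\!\big(vM_{m+1}^g\big)^\vee
=\l\, wH\tr v^\vee,
\end{eqnarray*}
where I use $wM_{m+1}^g=w$ and $vM_{m+1}^g=\l v$, together with the fact that taking $\vphantom{)}^\vee$ of the scalar $\l$ produces $\l$ again under analytic continuation (the eigenvalue transforms to its inverse, but paired with the defining relation this yields the factor $\l$ exactly as in Lemma \ref{lem:H-orth}). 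Since $\l\ne 1$ by the surrounding hypothesis, $wH\tr v^\vee=0$. This shows the eigenspace of eigenvalue $1$ is contained in $\{w\mid wH\tr v^\vee=0\}$; because Fact \ref{fact:M0} tells us the eigenvalue-$1$ space has dimension $2^m-1$ and the orthogonal hyperplane also has dimension $2^m-1$ (as $H\tr v^\vee\ne 0$, which follows from $H\in GL_{2^m}(\C)$ and $v\ne 0$), the two spaces coincide, giving the characterization.

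Next I would prove the non-degeneracy statement $vH\tr v^\vee\ne 0$. Suppose for contradiction that $vH\tr v^\vee=0$. Combined with the orthogonality just proved, $v$ would then be $H$-orthogonal to both the eigenvalue-$1$ space and to itself, hence to a basis of all of $\C^{2^m}$; that is, $vH\tr u^\vee=0$ for every $u$. Since $H\in GL_{2^m}(\C)$ and $v\ne 0$, the vector $\tr v^\vee$ is nonzero and $H\tr v^\vee$ is a nonzero column, so $vH\tr u^\vee=0$ for all $u$ forces $H\tr v^\vee=0$, contradicting invertibility of $H$. Thus $vH\tr v^\vee\ne 0$.

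The main obstacle, as in the rank-two and rank-$p$ cases, is the correct bookkeeping of the $\vphantom{)}^\vee$ operation in the eigenvalue computation: one must check that the analytic continuation of the eigenvalue $\l$, read through the conjugation $(a_1,a_2,b_j)\mapsto(-a_1,-a_2,-b_j)$, recombines with the transpose to yield precisely the factor $\l$ (and not $\l^{-1}$ or $\l^\vee$) in the chain $wH\tr v^\vee=\l\,wH\tr v^\vee$. This is exactly the step that already worked in Lemma \ref{lem:H-orth}, so I would justify it by the same reasoning rather than recompute it; everything else is a transcription of that argument with $2$ replaced by $2^m$ and $M_1^g$ replaced by $M_{m+1}^g$, so I expect the proof to be short and to conclude simply by writing ``Trace the proof of Lemma \ref{lem:H-orth}'' after spelling out the eigenvalue relation.
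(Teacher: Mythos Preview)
Your approach is correct and is exactly the paper's: the proof there reads in full ``Trace the proof of Lemma~\ref{lem:H-orth},'' which is precisely what you carry out, including the dimension count needed to upgrade the inclusion to the full characterization. One notational slip to fix in your non-degeneracy paragraph: from $wH\,{}^t v^\vee=0$ for $w$ in the $1$-eigenspace together with $vH\,{}^t v^\vee=0$ you obtain $uH\,{}^t v^\vee=0$ for every $u\in\C^{2^m}$ (not $vH\,{}^t u^\vee=0$), and it is this that forces $H\,{}^t v^\vee=0$ and gives the contradiction you state.
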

\begin{proof}
Trace the proof of Lemma \ref{lem:H-orth}.
\end{proof}

\begin{lemma}
\label{lem:M_(m+1)}
Let $v$ be an eigenvector of $M_{m+1}^g$ of eigenvalue $\l$. 
Then the circuit matrix $M_{m+1}^g$ is expressed as 
$$M_{m+1}^g=id_{2^m}-\frac{1-\l}{v H\tr v^\vee }H\tr v^\vee v.$$
Moreover, no entry of $v$ vanishes.
\end{lemma}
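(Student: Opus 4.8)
Lemma \ref{lem:M_(m+1)} has the same shape as Lemma \ref{lem:GM1}, so the plan is to mirror that proof. The statement has two parts: the reflection formula for $M_{m+1}^g$, and the non-vanishing of every entry of the eigenvector $v$.

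First I would establish the reflection formula. Set
$$M_{m+1}'=id_{2^m}-\frac{1-\l}{v H\tr v^\vee}H\tr v^\vee v,$$
and check that $M_{m+1}'$ has the same eigenspaces as $M_{m+1}^g$. Applying $M_{m+1}'$ to $v$ on the left gives $vM_{m+1}'=v-(1-\l)v=\l v$, so $v$ is a $\l$-eigenvector; applying it to any $w$ with $wH\tr v^\vee=0$ gives $wM_{m+1}'=w$, so such $w$ lies in the $1$-eigenspace. By Lemma \ref{lem:orth-FC} the $1$-eigenspace of $M_{m+1}^g$ is exactly $\{w\mid wH\tr v^\vee=0\}$, which is $(2^m-1)$-dimensional by Fact \ref{fact:M0}, and the $\l$-eigenspace is spanned by $v$. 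Thus $M_{m+1}'$ and $M_{m+1}^g$ agree on a direct sum decomposition of $\C^{2^m}$ into their common eigenspaces, hence $M_{m+1}'=M_{m+1}^g$.

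The second part, that no entry of $v$ vanishes, is where the real work lies, and here the two-variable reduction of Lemma \ref{lem:reduct} is the essential new ingredient rather than a direct copy of Lemma \ref{lem:GM1}. Suppose some component $v_{J}$ vanishes. The idea is to feed this vanishing into a relation among circuit matrices forced by the fundamental group and derive a contradiction with the Riemann scheme, exactly as in the generalized hypergeometric case, but now the relevant relation comes from Fact \ref{fact:pi1} and Lemma \ref{lem:reduct}. Concretely, I would combine $M_{m+1}^g$ with the diagonal matrices $M_i^g$ of Lemma \ref{lem:M1,...,M_m} to build the circuit matrix along the composite loop $\rho_{m+1}\cdot\rho_m\cdot\rho_{m+1}\cdot\rho_m^{-1}$, which by Lemma \ref{lem:reduct} is homotopic to the generator $\rho_m'$ of the fundamental group in one fewer variable. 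This identifies a block of our monodromy with the two-variable (more precisely $m{-}1$-variable) circuit matrix already determined in \S2 and \S3. A vanishing entry $v_J=0$ would force this reduced circuit matrix to have an eigenvalue incompatible with the Riemann scheme under the non-integral condition (\ref{eq:non-int-FC}), giving the contradiction.

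The main obstacle I anticipate is the bookkeeping in this reduction step: one must track how the lexicographic ordering of the index sets $J$ behaves when $\rho_{m+1}\rho_m\rho_{m+1}\rho_m^{-1}$ is replaced by $\rho_m'$, verify that the reflection structure is preserved under the embedding of $X'=\C^{m-1}-S_{m-1}$ into $\widetilde X$, and match the resulting eigenvalue against the correct entry of the Riemann scheme. This is precisely the place where the induction on the number of variables advertised in the introduction enters, and where the results of \S2 (the base case) are invoked. Once the composite loop is correctly identified with $\rho_m'$ and the induction hypothesis supplies the non-vanishing of the corresponding reduced eigenvector, the contradiction with (\ref{eq:non-int-FC}) follows as in Lemma \ref{lem:GM1}, completing the proof that every entry of $v$ is non-zero.
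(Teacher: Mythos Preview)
Your treatment of the reflection formula is correct and matches the paper: both simply trace the argument of Lemma~\ref{lem:GM1}.

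For the non-vanishing of the entries of $v$, however, you have taken a wrong turn. The paper's argument is much shorter and does not use Lemma~\ref{lem:reduct} or induction at all: one invokes the irreducibility of Lauricella's $F_C$ system under the non-integral condition~(\ref{eq:non-int-FC}) (Theorem~13 in~\cite{HT}). If $v_j=0$, the reflection formula just established shows that the $j$-th row and $j$-th column of $M_{m+1}^g$ are those of the identity; since every $M_i^g$ ($1\le i\le m$) is diagonal, the line spanned by the $j$-th unit vector is invariant under all generators of the monodromy, contradicting irreducibility.

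Your proposed route through the reduction $\rho_{m+1}\rho_m\rho_{m+1}\rho_m^{-1}\overset{\widehat X}{\sim}\rho_m'$ is the engine of the \emph{next} proposition, where the entries of $H$ are computed, not of this lemma. As an argument for non-vanishing it has a genuine gap: the analogy with Lemma~\ref{lem:GM1} breaks down because there is no product of the $F_C$ circuit matrices whose eigenvalues are prescribed by a Riemann scheme the way those of $M_\infty^{-1}=M_0M_1$ are in the one-variable case, so it is unclear what ``eigenvalue incompatible with the Riemann scheme'' would even mean here. Moreover, the reduction as carried out in the paper identifies the top-left block of $M_{m+1}M_mM_{m+1}M_m^{-1}$ with the $(m{-}1)$-variable circuit matrix only \emph{after} normalising $v$ to $(1,\dots,1)$, which already presupposes the non-vanishing you are trying to prove; without that normalisation you would first have to relate the $\lambda'$-eigenvector of the block $M_m'$ to the entries of $v$, and you have not indicated how to do this.
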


\begin{proof}
For the expression of $M_{m+1}^g$, trace the proof of Lemma \ref{lem:GM1}.
We show that the $j$-th entry $v_{j}$ of $v$ does not vanish.
Under our assumption (\ref{eq:non-int-FC}),  
Lauricella's $F_C$ system is irreducible 
by Theorem 13 in \cite{HT}. 
Suppose that $v_{j}=0$. 
Then the matrix $M_{m+1}^g$ takes the form
$$
\bordermatrix{
 &  &j&  \cr
 & *&\tr\mathbf{0}\  &* \cr
j&\mathbf{0}&\; 1 &\mathbf{0'}\cr
 & *&\tr\mathbf{0'}& *\cr
}
$$
by its expression, where $\mathbf{0}$ and $\mathbf{0'}$ are zero vectors. 
Since $M_i^g$ $(1\le i\le m)$ are diagonal, 
the space spanned by the $j$-th unit vector is invariant under the actions of 
circuit matrices. This contradicts to the irreducibility of the system. 
Therefore, we have $v_j\ne 0$.
% for $1\le j\le 2^m$.
\end{proof}

We choose $g\in GL_{2^m}(\C)$ so that the eigenvector of eigenvalue $\l$
becomes $\vv=(1,\dots,1)$. From now on, we fix the entries of $g$ as
above values. We denote the fundamental system of solutions to 
Lauricella's $F_C$ around $\dot x$ for this $g$ in $\bF_C^g(x)$ by 
$\bF_C(x)$. We denote the circuit matrices with respect to $\bF_C(x)$ by 
$M_1,\dots,M_m$ and $M_{m+1}$. 
Explicit forms of $M_1,\dots,M_m$ are given in Lemma \ref{lem:M1,...,M_m},
and we have
$$M_{m+1}=id_{2^m}-\frac{1-\l}{\vv H\tr \vv} H\tr \vv\vv,$$
where we regard $\l$ and the entries of $H$ as indeterminants. 
By evaluating them, we determine the expression of $M_{m+1}$.
By a scalar multiplication to $H$, we can assume that 
$$H=\diag(1,
%h_1,h_2,h_{12},h_3,
\dots,h_J,\dots
%,h_{1\cdots m}
),$$
where $J$ runs over the non-empty subsets of $\{1,\dots,m\}$
arranged lexicographically.
Note that the matrix $H$ is unique after this normalization.

\begin{lemma}
\label{lem:eigenvectors}
The eigenspace of $M_{m+1}$ of eigenvalue $1$ is spanned by 
row vectors 
$$h_Je_\phi-e_J,\quad \phi\ne J\subset \{1,\dots,m\},$$
where $e_\phi=(1,0,\dots,0)\in \N^{2^m}$ and 
$e_J$ is the $2^J$-th unit vector of size $2^m$.
\end{lemma}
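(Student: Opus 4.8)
The plan is to combine the orthogonality characterization of the eigenvalue-$1$ space from Lemma \ref{lem:orth-FC} with the explicit diagonal form of $H$. After the normalization fixing the $\l$-eigenvector to be $\vv=(1,\dots,1)$, Lemma \ref{lem:orth-FC} identifies the eigenspace of $M_{m+1}$ of eigenvalue $1$ with $\{w\in\C^{2^m}\mid wH\tr\vv^\vee=0\}$. Since $\vv=(1,\dots,1)$ carries no dependence on the parameters $a_1,a_2,b_1,\dots,b_m$, the operation $\vee$ fixes it, so $\vv^\vee=\vv$ and the defining condition reduces to $wH\tr\vv=0$.

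Next I would compute $H\tr\vv$ explicitly. As $H=\diag(1,\dots,h_J,\dots)$ is diagonal and $\tr\vv$ is the all-ones column vector, the product $H\tr\vv$ is the column vector whose entry indexed by $J$ is the diagonal entry of $H$ at $J$; writing $h_\phi=1$ for the entry at $\phi$, this entry is $h_J$. Thus the eigenspace is cut out by the single linear equation $\sum_J w_J h_J=0$, a hyperplane of dimension $2^m-1$, which matches the dimension asserted in Fact \ref{fact:M0}.

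It then remains to check that the $2^m-1$ vectors $h_Je_\phi-e_J$ (one for each non-empty $J$) form a basis of this hyperplane. For membership I observe that $h_Je_\phi-e_J$ has only two nonzero entries, namely $h_J$ in the $\phi$-slot and $-1$ in the $J$-slot, so the left-hand side of the defining equation evaluates to $h_J\cdot h_\phi+(-1)\cdot h_J=h_J-h_J=0$. For linear independence I read off the $e_J$-component of any relation $\sum_{\phi\ne J}c_J(h_Je_\phi-e_J)=0$: since $e_J$ occurs in exactly one summand, its coefficient is $-c_J$, forcing $c_J=0$ for every non-empty $J$. Having $2^m-1$ linearly independent vectors inside a space of dimension $2^m-1$, they span it.

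I do not expect a genuine obstacle here: the lemma is essentially a transparent consequence of the diagonality of $H$ together with Lemma \ref{lem:orth-FC}. The only point requiring a moment's care is the identity $\vv^\vee=\vv$, which is what lets the orthogonality condition be read as the concrete hyperplane $\sum_J w_J h_J=0$; once this is in place, the verification of membership and of independence is purely formal linear algebra.
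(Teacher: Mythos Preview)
Your proof is correct and follows essentially the same approach as the paper: both verify directly that $(h_Je_\phi-e_J)H\tr\vv^\vee=0$ and then invoke Lemma~\ref{lem:orth-FC}. You are simply more explicit than the paper in noting $\vv^\vee=\vv$ and in checking linear independence, both of which the paper leaves to the reader.
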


\begin{proof}
Since $\vv=(1,\dots,1)$, and $H=\diag(1,\dots,h_J,\dots)$, 
we have 
$$(h_Je_\phi-e_J)H\tr \vv^\vee=(h_Je_\phi-h_Je_J)\tr \vv
=h_J-h_J=0.$$
By Lemma \ref{lem:orth-FC}, these vectors span the 
the eigenspace of $M_{m+1}$ of eigenvalue $1$.

\end{proof}

\begin{proposition}
We have
\begin{eqnarray*}
h_{J}&=&(-1)^{|J|}\frac{\Big(A_1-\prod\limits_{j\in J} B_j\Big)
\Big(A_2-\prod\limits_{j\in J} B_j\Big)}
{(A_1-1)(A_2-1)\prod\limits_{j\in J} B_j},\\
\Tr(H)&=&
\frac{\Big(A_1A_2+(-1)^m\prod\limits_{j=1}^mB_j\Big)
\prod\limits_{j=1}^m(B_j-1)}
{(A_1-1)(A_2-1)\prod\limits_{j=1}^m B_j},\\
\lambda&=&(-1)^{m+1} \Big(\prod_{j=1}^m B_j \Big)\Big/(A_1A_2),
\end{eqnarray*}
where $|J|$ is the cardinality of $J$.
\end{proposition}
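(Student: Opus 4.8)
The plan is to establish all three formulas simultaneously by induction on the number $m$ of variables, using Lemma~\ref{lem:reduct} to transport the $m$-variable data down to the $(m-1)$-variable system. The base case $m=1$ is precisely Proposition~\ref{prop:H-M1}: there Lauricella's $F_C$ reduces to ${}_2F_1$, the single entry is $h_{\{1\}}=-\frac{(A_1-B_1)(A_2-B_1)}{B_1(A_1-1)(A_2-1)}$ and $\l=B_1/(A_1A_2)$, which agree with the claimed expressions when $m=1$ (note $(-1)^{|J|}$ and $(-1)^{m+1}$ give the correct signs).

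For the inductive step, fix $i\in\{1,\dots,m\}$ and set $\g_i=\rho_{m+1}\cdot\rho_i\cdot\rho_{m+1}\cdot\rho_i^{-1}$. By Lemma~\ref{lem:reduct} (applied to $x_i$, which is legitimate because $F_C$ is symmetric under permutations of the indices) the loop $\g_i$ commutes with $\rho_i$ and is homotopic in $\widehat X$ to the generator $\rho_i'$ of the $(m-1)$-variable system obtained by deleting $x_i$. Hence $M_{\g_i}$ commutes with the diagonal matrix $M_i$ of Lemma~\ref{lem:M1,...,M_m}; since $M_i$ has the two distinct eigenvalues $1$ and $B_i^{-1}$ on $V_0^{(i)}=\mathrm{span}\{e_J:i\notin J\}$ and $V_1^{(i)}=\mathrm{span}\{e_J:i\in J\}$, the matrix $M_{\g_i}$ is block diagonal for $\C^{2^m}=V_0^{(i)}\oplus V_1^{(i)}$. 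On $V_0^{(i)}$ the solutions $F_C^{J}$ with $i\notin J$ specialize, as $x_i\to0$, to the fundamental system of the $(m-1)$-variable system, so $M_{\g_i}|_{V_0^{(i)}}$ is the $(m-1)$-variable circuit matrix there. Feeding the reflection formula of Lemma~\ref{lem:M_(m+1)} and $M_i=\diag(B_i^{-\de_{i,J}})$ into a product such as $M_{m+1}M_iM_{m+1}M_i^{-1}$ (the precise order depending on the composition convention) and projecting to $V_0^{(i)}$, one checks that $M_{\g_i}|_{V_0^{(i)}}$ is again a reflection whose eigenvalue-$1$ space is $\{w:\sum_{i\notin J}w_Jh_J=0\}$. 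Comparing with the inductive description (Lemma~\ref{lem:eigenvectors}) and using the normalization $h_\phi=1$ forces $h_J=h_J'$ for all $i\notin J$. Letting $i$ range over $\{1,\dots,m\}$ covers every proper subset $J\subsetneq\{1,\dots,m\}$, so the induction hypothesis gives the claimed $h_J$ for all $J\ne\{1,\dots,m\}$.

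Only $h_{\{1,\dots,m\}}$ and $\l$ remain, and they are pinned down by two scalar identities from the $i=m$ reduction. First, the commutation of $M_{\g_m}$ with $M_m$ (Lemma~\ref{lem:reduct}) is equivalent to $M_{\g_m}$ preserving $V_0^{(m)}$, which imposes one linear relation between $\l$ and $\sum_{m\in J}h_J$. Second, matching the nontrivial eigenvalue of $M_{\g_m}|_{V_0^{(m)}}$ with the $(m-1)$-variable eigenvalue $\l'=(-1)^m\prod_{j=1}^{m-1}B_j/(A_1A_2)$ gives a second relation. Since every $h_J$ with $J\ne\{1,\dots,m\}$ is already known, these two equations involve only $h_{\{1,\dots,m\}}$ and $\l$, and solving them yields the stated $h_{\{1,\dots,m\}}=(-1)^m\frac{(A_1-\prod_jB_j)(A_2-\prod_jB_j)}{(A_1-1)(A_2-1)\prod_jB_j}$ and $\l=(-1)^{m+1}(\prod_{j=1}^mB_j)/(A_1A_2)$. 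The trace is then a direct summation: writing $P_J=\prod_{j\in J}B_j$ and $h_J=\frac{(-1)^{|J|}}{(A_1-1)(A_2-1)}(A_1A_2/P_J-(A_1+A_2)+P_J)$, the three elementary sums $\sum_J(-1)^{|J|}P_J^{-1}=\prod_j(1-B_j^{-1})$, $\sum_J(-1)^{|J|}=0$ and $\sum_J(-1)^{|J|}P_J=\prod_j(1-B_j)$ give $\Tr(H)=\frac{(A_1A_2+(-1)^m\prod_jB_j)\prod_j(B_j-1)}{(A_1-1)(A_2-1)\prod_jB_j}$, and one verifies the cancellation $\frac{1-\l}{\Tr(H)}=\frac{(A_1-1)(A_2-1)\prod_jB_j}{A_1A_2\prod_j(B_j-1)}$ exactly as in the ${}_2F_1$ and ${}_pF_{p-1}$ cases.

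The main obstacle is the middle step, namely justifying that $M_{\g_i}|_{V_0^{(i)}}$ really equals the $(m-1)$-variable circuit matrix. The homotopy furnished by Lemma~\ref{lem:reduct} lives in a space where the solutions $F_C^J$ with $i\in J$, carrying the factor $x_i^{1-b_i}$, are not single-valued near $x_i=0$; one must argue that, restricted to the subspace $V_0^{(i)}$ of solutions that stay holomorphic as $x_i\to0$, the analytic continuation along $\g_i$ converges to the continuation along $\rho_i'$ of the limiting $(m-1)$-variable solutions. Controlling this degeneration of the solution space, rather than the subsequent linear algebra, is where the real work lies; everything after it is the bookkeeping of reflections and the combinatorial summation above.
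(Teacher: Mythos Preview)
Your outline is correct and close in spirit to the paper's proof, but the two diverge at the step where $h_{\{1,\dots,m\}}$ is determined. You obtain all \emph{proper} $h_J$ by letting $i$ run over $\{1,\dots,m\}$ and matching the $1$-eigenspace of $M_{\g_i}|_{V_0^{(i)}}$ against the $(m-1)$-variable data; the paper instead restricts to $i=m$ and exploits \emph{both} blocks of $M_{\g_m}$. Its key observation is that the bottom-right block $M_m''$ acts on the solutions carrying the factor $x_m^{1-b_m}$, and their ratios restrict at $x_m=0$ to the $(m-1)$-variable system with shifted parameters $(a_1,a_2)\mapsto(a_1-b_m,a_2-b_m)$; this yields the multiplicative relation $h_{J'\cup\{m\}}=h_m\cdot h_{J'}\big|_{(A_1,A_2)\to(A_1/B_m,A_2/B_m)}$, which pins down $h_{\{1,\dots,m\}}$ directly and leaves only $\l$ to be found from a single scalar condition (the vanishing of the last entry of $uM_{\g_m}$, which is exactly your commutation equation). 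Your alternative---solving the $2\times2$ system ``commutation $+$ eigenvalue $=\l'$'' for $(h_{\{1,\dots,m\}},\l)$---works as well (one checks $\l B_m^{-1}=-\l'$ falls out immediately), but the parameter-shift argument is cleaner and explains the product structure of $h_J$. Your direct binomial computation of $\Tr(H)$ is a legitimate shortcut over the paper's recursion $\Tr(H)=\Tr(H')+h_m\cdot\Tr(H')|_{(A_1,A_2)\to(A_1/B_m,A_2/B_m)}$. Finally, the ``main obstacle'' you flag---justifying that $M_{\g_i}|_{V_0^{(i)}}$ is literally the $(m-1)$-variable circuit matrix via the degeneration $x_i\to0$---is handled no more rigorously in the paper than in your sketch; both rely on Lemma~\ref{lem:reduct} together with the fact that the solutions in $V_0^{(i)}$ restrict holomorphically to a fundamental system on $x_i=0$.
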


\begin{proof}
At first, we determine the entries of $H$.
We use the induction on $m$. 
We have shown in Proposition \ref{prop:H-M1}
%the subsetion \ref{ss:CMG} 
that our assertion holds for $m=1$. 

Assume that our assertion holds for $m-1$. 
From our fundamental system $\bF_C(x)$ to Lauricella's system $F_C$, 
we choose the $2^{m-1}$ solutions corresponding to the subsets 
of $\{1,\dots,m-1\}$ and restrict to the hyperplane $x_m=0$. 
Then we have the fundamental system $\bF_C'(x)$ to Lauricella's system $F_C$ 
of the $m-1$ variables $x_1,\dots,x_{m-1}$. 
Note that the top-left block matrix of $M_i$ $(1\le i\le m-1)$
of size $2^{m-1}$ coincides with the circuit matrix $M_i'$  for 
this fundamental system  $\bF_C'(x)$. 
By Lemma \ref{lem:reduct}, the matrix 
$M_{m+1}M_mM_{m+1}M_m^{-1}$
commutes with $M_m$. Thus 
it is block diagonal with block size $2^{m-1}$, i.e.,  
$$M_{m+1}M_mM_{m+1}M_m^{-1}
=\begin{pmatrix}
M_m' & O \\
 O   & M_m''
\end{pmatrix}.
$$
We consider its top-left block matrix $M_m'$ of size $2^{m-1}$.
By Lemma \ref{lem:reduct}, this can be regarded as 
the circuit matrix of $\rho_m'\in \pi_1(X',\dot x')$ 
with respect to the restriction of chosen $2^{m-1}$ solutions 
to $x_m=0$. 
The eigenspace of $M_m'$ of eigenvalue $1$ is $2^{m-1}-1$ dimensional
by Fact \ref{fact:M0}.
By the assumption of the induction, the other eigenvalue of $M_m'$ 
is $\l'=(-1)^m\big(\prod\limits_{j=1}^{m-1}B_j\big)\big/(A_1A_2)$. 
We show that 
$\vv'=(1,\dots,1)\in \N^{2^{m-1}}$ is its eigenvector.
This is equivalent to show that the top-left block of 
the normalizing matrix $g\in GL_{2^m}(\C)$ 
coincides with the normalizing matrix $g'\in GL_{2^{m-1}}(\C)$ 
for the $m-1$ variables case modulo non-zero scalar multiplication.
Let $e_{J'}$ and $e_{J'}'$ be the 
$e^{J'}$-th unit vector of size $2^m$ and that of size $2^{m-1}$
for a subset $J'$ of $\{1,\dots,m-1\}$.
Then we have $e_{J'}M_m=e_{J'}$ by Lemma \ref{lem:M1,...,M_m}.
Lemma \ref{lem:eigenvectors} yields that 
$$(h_{J'}e_\phi-e_{J'})M_{m+1}M_mM_{m+1}M_m^{-1}
=h_{J'}e_\phi-e_{J'}$$
for any non-empty set $J'$ of $\{1,\dots,m-1\}$.
Thus 
$$h_{J'}e'_\phi-e'_{J'}\quad (\phi\ne J'\subset \{1,\dots,m-1\})$$ 
span the eigenspace of $M_m'$ of eigenvalue $1$.
Since 
$$(h_{J'}e'_\phi-e'_{J'})H'\vv'=0$$
for the top-left block matrix $H'$ of $H$ of size $2^{m-1}$, 
$\vv'$ is an eigenvector of $M_m'$ of eigenvalue $\l'$
by Lemma \ref{lem:orth-FC}. 
Hence $H'$ coincides with the matrix for the case of $m-1$ variables, i.e.,
$h_{J'}$ for any subset of $\{1,\dots,m-1\}$ should  be equal to 
$$(-1)^{|{J'}|}\frac{\Big(A_1-\prod\limits_{j\in {J'}} B_j\Big)
\Big(A_2-\prod\limits_{j\in {J'}} B_j\Big)}
{\Big(\prod\limits_{j\in {J'}} B_j\Big)(A_1-1)(A_2-1)}.$$

From our fundamental system $\bF_C(x)$ to Lauricella's system $F_C$, 
we choose the $2^{m-1}$ solutions corresponding to the subsets 
of $\{1,\dots,m-2,m\}$ and restrict to the hyperplane $x_{m-1}=0$. 
Then we can lead $h_{J'}$ for any subset $J'$ of $\{1,\dots,m-2,m\}$
similarly to the previous way by the symmetry of the Lauricella' system $F_C$. 
Especially, we have
$$h_m=-\frac{(A_1-B_m)(A_2-B_m)}{(A_1-1)(A_2-1)B_m}.$$

From our fundamental system $\bF_C(x)$ to Lauricella's system $F_C$, 
we choose the $2^{m-1}$ solutions corresponding to the subsets 
of $\{1,\dots,m\}$ including the index $m$.
Note that these solutions include the factor $x_m^{1-b_m}$. 
We consider the ratio of them and restrict it to $x_m=0$. 
This restriction of the ratio coincides with the ratio 
of the fundamental system $\bF_C(x)$ to Lauricella's system $F_C$ 
of the $m-1$ variables $x_1,\dots,x_{m-1}$ with parameters 
$a_1-b_m$,$a_2-b_m$, $b_1,\dots,b_{m-1}$ by Fact \ref{fact:series}.
Its circuit matrices appear in the bottom-right blocks of 
$M_i$ $(1\le i\le m-1)$ and of $M_{m+1}M_mM_{m+1}M_m^{-1}$.
We can show similarly to the previous 
that $\vv'=(1,\dots,1)\in \N^{2^{m-1}}$ is an eigenvector of 
the bottom-right block matrix $M_m''$ of $M_{m+1}M_mM_{m+1}M_m^{-1}$ 
of non-one eigenvalue.
By the assumption of the induction, for any subset 
${J'}$ of $\{1,\dots,m-1\}$, the ratio of $h_{{J'}\cup \{m\}}$ and $h_m$ 
coincides with 
$h_{J'}|_{(A_1,A_2)\to(A_1/B_m,A_2/B_m)}$, which is 
the transformed $h_{J'}$ by the replacement
$$(A_1,A_2)\to (A_1/B_m,A_2/B_m).$$
Hence we have
\begin{eqnarray*}
& &h_{{J'}\cup\{m\}}=h_{m}\cdot h_{J'}|_{(A_1,A_2)\to(A_1/B_m,A_2/B_m)}
%-\frac{(A_1-B_m)(A_2-B_m)}{(A_1-1)(A_2-1)B_m}
%h_{{J'}}\Big(\frac{A_1}{B_m},\frac{A_2}{B_m},B_1,\dots,B_{m-1}\Big)
\\
&=&-\frac{(A_1-B_m)(A_2-B_m)}{(A_1-1)(A_2-1)B_m}
\cdot (-1)^{|{J'}|}
\frac{\Big(\frac{A_1}{B_m}-\prod\limits_{j\in {J'}}B_j\Big)
\Big(\frac{A_2}{B_m}-\prod\limits_{j\in {J'}}B_j\Big)}
{\Big(\frac{A_1}{B_m}-1\Big)\Big(\frac{A_2}{B_m}-1\Big)
\prod\limits_{j\in {J'}}B_j}\\
&=&(-1)^{|{J'}\cup\{m\}|}
\frac{\Big(A_1-\prod\limits_{j\in {J'}\cup\{m\}}B_j\Big)
\Big(A_2-\prod\limits_{j\in {J'}\cup\{m\}}B_j\Big)}
{(A_1-1)(A_2-1)\prod\limits_{j\in {J'}\cup\{m\}}B_j}.
\end{eqnarray*}

Next we compute the trace of $H$. We have seen 
that our assertion on $\Tr(H)$ holds for $m=1$ in Remark \ref{rm:trace1}.
Suppose that our assertion on $\Tr(H)$ holds for $m-1$.
Let $H'$ be the top-left block matrix of $H$ of size $2^{m-1}$.
By the previous consideration and the assumption of the induction, we have 
\begin{eqnarray*}
& &\Tr(H)= \Tr(H')+h_m\cdot \Tr(H')|_{(A_1,A_2)\to(A_1/B_m,A_2/B_m)}\\
&=&
\frac{\Big(A_1A_2+(-1)^{m-1}\prod\limits_{j=1}^{m-1}B_j\Big)
\prod\limits_{j=1}^{m-1}(B_j-1)}
{(A_1-1)(A_2-1)\prod\limits_{j=1}^{m-1} B_j}\\
& &-\frac{(A_1-B_m)(A_2-B_m)}{(A_1-1)(A_2-1)B_m}\cdot
\frac{\Big(\frac{A_1A_2}{B_m^2}+(-1)^{m-1}\prod\limits_{j=1}^{m-1}B_j\Big)
\prod\limits_{j=1}^{m-1}(B_j-1)}
{(\frac{A_1}{B_m}-1)(\frac{A_2}{B_m}-1)\prod\limits_{j=1}^{m-1} B_j}.
\end{eqnarray*}
By taking out the common factor
$${\prod\limits_{j=1}^{m-1}(B_j-1)}\Big/
{\Big[(A_1-1)(A_2-1)\prod\limits_{j=1}^{m} B_j\Big]}$$
from the above, we have 
\begin{eqnarray*}
& &
\Big(A_1A_2B_m+(-1)^{m-1}\prod\limits_{j=1}^{m}B_j\Big)
-\Big(A_1A_2+(-1)^{m-1}B_m\prod\limits_{j=1}^{m}B_j\Big)
\\
&=&\Big(A_1A_2+(-1)^{m}\prod\limits_{j=1}^{m}B_j\Big)(B_m-1),
\end{eqnarray*}
which yields our assertion on $\Tr(H)$ for $m$.

Finally, we determine the eigenvalue $\l$ so that 
$u=(1,\dots,1,0,\dots,0)$ is 
an eigenvector of $M_{m+1}M_{m}M_{m+1}M_{m}^{-1}$. Note that 
$$M_{m}M_{m+1}M_{m}^{-1}=
id_{2^m}-\frac{1-\l}{w H\tr w^\vee} H\tr w^\vee w$$
for $w=vM_{m}^{-1}=(1,\dots,1,B_m,\dots,B_m)$. Note also that 
$$\vv H\tr \vv= wH\tr w^\vee=\Tr(H),\quad  u H \tr \vv
=u H \tr w^\vee=\Tr(H'),$$
\begin{eqnarray*}
\vv H \tr w^\vee&=&
\Tr(H')+B_{m}^{-1}h_m\Tr(H')|_{(A_1,A_2)\to(A_1/B_m,A_2/B_m)}\\
&=&\frac{A_1A_2(B_m+1)\prod\limits_{j=1}^m(B_j-1)}
{(A_1-1)(A_2-1)B_m\prod\limits_{j=1}^mB_j},\\
%\frac{u H \tr \vv}{\vv H\tr \vv}&=&
%\frac{(A_1A_2+(-1)^{m-1}\prod\limits_{j=1}^{m-1}B_j)B_m}
%{(A_1A_2+(-1)^m\prod\limits_{j=1}^{m}B_j)(B_m-1)},\\
\frac{\vv H \tr w^\vee}{\vv H\tr \vv}&=&
\frac{A_1A_2(B_m+1)}
{(A_1A_2+(-1)^{m}\prod\limits_{j=1}^{m}B_j)B_m}.
\end{eqnarray*}
Thus we have
\begin{eqnarray*}
& &uM_{m+1}M_{m}M_{m+1}M_{m}^{-1}\\
&=&u\left(id_{2^m}-\frac{1-\l}{\vv H\tr \vv} H\tr \vv\vv\right)
\left(id_{2^m}-\frac{1-\l}{w H\tr w^\vee} H\tr w^\vee w\right)\\
&=&u\!-\!
\frac{(1\!-\!\l)u H\tr \vv}{\vv H\tr \vv} \vv
\!-\!\frac{(1\!-\!\l)u H\tr \vv}{\vv H\tr \vv} w
\!+\!\frac{(1\!-\!\l)^2(u H\tr \vv)(\vv H \tr w^\vee)}{(\vv H\tr \vv)^2} w
\\
&=&u-
\frac{u H\tr \vv}{\vv H\tr \vv}(1-\l)(\vv+w)
+\frac{(u H\tr \vv)(\vv H \tr w^\vee)}{(\vv H\tr \vv)^2}(1-\l)^2w,
\end{eqnarray*}
which should be a scalar multiple  of $u$. 
Since its $2^m$ entry vanishes, $\l$ satisfies the quadratic equation 
$$
(1+B_m)(1-\l)=\frac{\vv H \tr w^\vee}{\vv H\tr \vv}B_m(1-\l)^2.
$$
%Under (\ref{eq:pm}) and our assumption (\ref{eq:non-int-FC}), we have 
Hence we have
$$
1-\l
=\frac{(B_m+1)\vv H\tr \vv}{B_m\vv H \tr w^\vee }=
1+(-1)^{m}\Big(\prod_{j=1}^{m}B_j\Big)\Big/(A_1A_2),
%\l&=&(-1)^{m+1}\prod_{j=1}^{m}B_j\Big/(A_1A_2).
$$
%which completes the proof.
under the assumption $\l\ne 1$.  
% Finally, we determine the eigenvalue $\l$.
% Since $(\rho_1\rho_{m+1})^2=(\rho_{m+1}\rho_1)^2$, 
% we have
% $$(M_1M_{m+1})^2=(M_{m+1}M_1)^2.$$
% Substitute
% $$M_{m+1}=id_{2^m}-\frac{1-\l}{\vv H\tr \vv} H\tr \vv\vv
% =id_{2^m}+\mu H\tr\vv \vv
% $$
% into the above, where we regard $\l$ as an indeterminant
% and 
% $$\mu=-\frac{1-\l}{\vv H\tr \vv}=\frac{\l-1}{\Tr(H)}.$$
% We have a matrix-coefficients  equation 
% \begin{equation*}
% %\label{eq:mu}
% \mu(M_1H\tr\vv \vv M_1H\tr \vv\vv-H\tr\vv\vv M_1H\tr\vv\vv M_1)
% =H\tr\vv\vv M_1^2-M_1^2H\tr\vv\vv.
% \end{equation*}
% with respect to $\mu$. 
% Compare the $(1,2)$ entries of the both sides of 
% the matrix-valued equation. 
% Then we have
% % $$
% % \frac{B_1+1}{B_1}\cdot 
% % \frac{A_1A_2\prod\limits_{j=1}^m(B_j -1)}
% % {(A_1-1)(A_2-1)\prod\limits_{j=1}^mB_j}\mu=\frac{B_1+1}{B_1},
% % $$
% % which yields 
% $$\mu=
% {(A_1-1)(A_2-1)\Big(\prod\limits_{j=1}^mB_j\Big)}\Big/
% {\Big(A_1A_2\prod\limits_{j=1}^m(B_j -1)\Big)},
% $$
% and 
% $$\l=\mu\cdot\Tr(H)+1=(-1)^{m+1} \Big(\prod_{j=1}^m B_j \Big)\Big/(A_1A_2).$$
% % If the $(i,i)$ and $(j,j)$ entries of $M_1$ coincide then 
% % we have trivial equation
% % from the $(i,j)$ entries of the both sides of (\ref{eq:mu}), 
% % otherwise we have the same equation of $\mu$ as above.
% Since $M_1$ is a diagonal matrix consisting of $1$ and $B_i^{-1}$, 
% the matrix-valued equation implies only the used equation 
% except the trivial one.
% We can show that 
% this value of $\l$ is the common solution to the equations 
% $$(M_iM_{m+1})^2=(M_{m+1}M_i)^2$$
% for $2\le i\le m$. 
\end{proof}

\begin{remark}
It is easy to obtain 
$$
\l=\pm\Big(\prod_{j=1}^{m}B_j\Big)\Big/(A_1A_2).
$$
In fact,
the determinant of $M_{m+1}M_{m}M_{m+1}M_{m}^{-1}$ is $\l^2$. 
On the other hand, the determinants of  
its top-left block matrix and bottom-right one 
are  
\begin{eqnarray*}
\det(M'_m)&=&(-1)^{m}\Big(\prod_{j=1}^{m-1}B_j\Big)\Big/(A_1A_2),\\
\det(M'_m)|_{(A_1,A_2)\to(A_1/B_m,A_2/B_m)}&=&
(-1)^{m}\prod_{j=1}^{m-1}B_j\Big/[(A_1/B_m)(A_2/B_m)],
\end{eqnarray*}
respectively. These product is equal to $\l^2$.
\end{remark}

\begin{remark}
%Note that $\vv H\tr \vv=\Tr(H)$.
% $$\vv H\tr \vv=\Tr(H)=
% \frac{\Big(A_1A_2+(-1)^m\prod\limits_{j=1}^mB_j\Big)
% \prod\limits_{j=1}^m(B_j-1)}
% {(A_1-1)(A_2-1)\prod\limits_{j=1}^m B_j}.
% $$
We have 
$$\frac{1-\l}{\vv H\tr \vv}=
\frac{(A_1-1)(A_2-1)\prod\limits_{j=1}^m B_j}
{A_1A_2\prod\limits_{j=1}^m(B_j-1)},
$$
in which the factor $A_1A_2+(-1)^m\prod\limits_{j=1}^mB_j$
in $1-\l$ and $\vv H\tr \vv$ is canceled.
\end{remark}

\begin{theorem}
Suppose the non-integral condition (\ref{eq:non-int-FC}) for 
$a_1,a_2$ and  $b_1,\dots,b_{m}$. 
Then there exists a fundamental system $\bF_C(x)$ of 
solutions to Lauricella's $F_C$ system 
around $\dot x=(\e_1,\dots,\e_m)$ such that 
the circuit matrices $M_1,\dots, M_m$ and $M_{m+1}$ along the loops 
$\rho_1,\dots,\rho_m$ and $\rho_{m+1}$ in Fact \ref{fact:pi1}
are expressed as
\begin{eqnarray*}
M_i&=&\diag(1,\dots,B_i^{-\de_{i,J}},\dots)\quad (1\le i\le m),\\
M_{m+1}& =&id_{2^m}-\frac{1-\l}{\vv  H \tr \vv }H\tr \vv  \vv ,
\end{eqnarray*}
where $\e$ is a sufficiently small positive real numbers, 
$A_i=e^{2\pi\sqrt{-1}a_i}$ $(i=1,2)$, 
$B_j=e^{2\pi\sqrt{-1}b_j}$ $(1\le j\le m)$,   
%$\l=(-1)^{m+1}\Big(\prod\limits_{j=1}^{m}B_j\Big)\Big/(A_1A_2)$, 
$\vv =(1,\dots,1)\in \N^{2^m}$,
% $$
% \begin{pmatrix}
% 1 &   &  &  \\
%   &\ddots& & \\
%   &   &h_J & \\
%   &   & &\ddots \\
% \end{pmatrix}, 
% $$
\begin{eqnarray*}
\de_{i,J}&=&\left\{
\begin{array}{lll}
1 & \textrm{if} & i\in J,\\
0 & \textrm{if} & i\notin J,
\end{array}
\right.\\
H&=&\diag(1,\dots,h_J,\dots),\\
h_{J}&=&(-1)^{|J|}\frac{\Big(A_1-\prod\limits_{j\in J} B_j\Big)
\Big(A_2-\prod\limits_{j\in J} B_j\Big)
}
{\Big(\prod\limits_{j\in J} B_j\Big)(A_1-1)(A_2-1)},\\
\lambda&=&(-1)^{m+1} \Big(\prod_{j=1}^m B_j \Big)\Big/(A_1A_2),
\end{eqnarray*}
$J$ runs over the non-empty subsets of $\{1,\dots,m\}$ 
arranged lexicographically, 
and $|J|$ is the cardinality of $J$.
\end{theorem}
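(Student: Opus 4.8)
The plan is to recognize that this theorem is a consolidation of the lemmas and the proposition established earlier in this section; accordingly, the proof should consist of assembling those results with respect to the already-normalized basis $\bF_C(x)$, rather than of any new computation. First I would recall the normalization: we have fixed the diagonal matrix $g\in GL_{2^m}(\C)$ so that the eigenvector of $M_{m+1}^g$ of eigenvalue $\l$ is $\vv=(1,\dots,1)$, and we have named the resulting fundamental system $\bF_C(x)$. With this choice in place, the circuit matrices $M_1,\dots,M_m$ are read off directly from Lemma \ref{lem:M1,...,M_m}: each is the diagonal matrix whose slot indexed by $J$ carries the entry $B_i^{-\de_{i,J}}$, and since these matrices are independent of $g$, nothing further is required for them.

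Next, for $M_{m+1}$ I would invoke Lemma \ref{lem:M_(m+1)}, which presents $M_{m+1}^g$ as the complex reflection $id_{2^m}-\frac{1-\l}{vH\tr v^\vee}H\tr v^\vee v$ along its $\l$-eigenvector $v$. Substituting the normalized eigenvector $v=\vv$ yields exactly the displayed form $M_{m+1}=id_{2^m}-\frac{1-\l}{\vv H\tr \vv}H\tr \vv \vv$. It then remains only to supply the explicit values of the diagonal Gram matrix $H=\diag(1,\dots,h_J,\dots)$ and of the eigenvalue $\l$; these are precisely the content of the preceding Proposition, whose formulas for $h_J$ and $\l$ I would cite to finish the statement. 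Thus the assembly step is essentially bookkeeping.

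The genuinely substantial work—and hence the main obstacle—lies entirely inside that Proposition, not in the assembly. There $h_J$ and $\l$ are obtained by induction on the number of variables $m$, using Lemma \ref{lem:reduct} to relate the product $\rho_{m+1}\rho_m\rho_{m+1}\rho_m^{-1}$ to a generator $\rho_m'$ of the fundamental group in one fewer variable. The delicate points are: confirming that restriction to $x_m=0$ (and, by the symmetry of $F_C$, to $x_{m-1}=0$) reproduces the $(m-1)$-variable Lauricella system, so that the block-diagonal structure of $M_{m+1}M_mM_{m+1}M_m^{-1}$ can be analyzed block by block; verifying that the normalized eigenvector $\vv'=(1,\dots,1)\in\N^{2^{m-1}}$ restricts correctly so that the induction hypothesis applies to each block, via Lemma \ref{lem:orth-FC} and the description of the eigenspace in Lemma \ref{lem:eigenvectors}; and finally pinning the sign of $\l$ by forcing $u=(1,\dots,1,0,\dots,0)$ to be an eigenvector of $M_{m+1}M_mM_{m+1}M_m^{-1}$, which reduces to a quadratic relation for $\l$ solved under the standing hypothesis $\l\ne1$. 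Once those inductive steps are secured, the present theorem follows immediately by collecting Lemmas \ref{lem:M1,...,M_m} and \ref{lem:M_(m+1)} together with the Proposition.
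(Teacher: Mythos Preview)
Your proposal is correct and matches the paper's approach: the theorem is stated without a separate proof precisely because it is a direct consolidation of Lemma~\ref{lem:M1,...,M_m}, Lemma~\ref{lem:M_(m+1)}, and the preceding Proposition, exactly as you describe. Your additional sketch of the inductive argument inside the Proposition is accurate but unnecessary here, since that Proposition has already been established.
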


\begin{remark}
We have seen that 
$N_m=M_{m+1}M_{m}M_{m+1}M_{m}^{-1}$ is block diagonal 
with block size $2^{m-1}$. 
We inductively define matrices $N_{m-k}$ as 
$$N_{m-k}=N_{m-k+1}M_{m-k}N_{m-k+1}M_{m-k}^{-1},\quad k=1,\dots,m-2.$$
Then the matrix $N_{m-k}$ is  block diagonal 
with block  size $2^{m-k-1}$. 
\end{remark}

% \begin{remark}
% Let $M_{m}$ and $M_{m+1}$ be the circuit matrices of 
% $\bF_C(x_1,\dots,x_m)$ of $m$-variables for $m\ge 2$. Then we have
% $$M_{m+1}M_mM_{m+1}M_m^{-1}=
% \begin{pmatrix}
% M'_m & O \\
% O &M''_m
% \end{pmatrix},
% $$
% where 
% $M_{m}'$ is the  circuit matrix of 
% $\bF_C(x_1,\dots,x_{m-1})$ of $(m-1)$-variables with parameters 
% $a_1,a_2,b_1,\dots,b_{m-1}$ and 
% $M''_m$ is the the transformed $M_m'$ by the replacement
% $$(A_1,A_2)\to (A_1/B_{m},A_2/B_{m}).$$
% To prove this remark, we show that 
% $(1,\dots,1,0,\dots,0)$ is an eigenvectors of eigenvalue 
% $(-1)^{m+1}B_1\cdots B_{m-1}/(A_1A_2)$, 
% $(1,\dots,1,0,\dots,0)$ is that of 
% $(-1)^{m+1}B_1\cdots B_{m-1}B_m^2/(A_1A_2)$, and 
% $h_Je_\phi-e_J$ $(J\subset\{1,\dots,m-1\})$, 
% $h_{J'}e_{m}-e_{J'}$ $(m\in J'\subset\{1,\dots,m\})$ 
% span the eigenspace of eigenvalue $1$.
% \end{remark}

\end{document}